\newtheorem{thm}{Theorem}
\newtheorem{lem}[thm]{Lemma}
\newtheorem{prop}[thm]{Proposition}
\newtheorem{cor}[thm]{Corollary}
\newtheorem{open}[thm]{Problem}
\def\scaleddraw #1 by #2 (#3 scaled #4){{
  \dimen0=#1 \dimen1=#2
  \divide\dimen0 by 1000 \multiply\dimen0 by #4
  \divide\dimen1 by 1000 \multiply\dimen1 by #4
  \draw \dimen0 by \dimen1 (#3 scaled #4)}
  }
\begin{document}

\title[Total domination vertex critical graphs]
{On the existence problem of the total domination vertex critical graphs}

\author{Moo Young Sohn}
\address{Mathematics, Changwon National
University Changwon 641-773, Korea}
\email{mysohn@changwon.ac.kr}
\thanks{This work was supported by Basic Science Research Program through the National Research Foundation of Korea(NRF) grant funded by the Korea government(MEST)(2009-0073714).
}

\author{Dongseok Kim}
\address{Department of Mathematics \\Kyonggi University
\\ Suwon, 443-760 Korea}
\email{dongseok@kgu.ac.kr}
\thanks{}

\author{Young Soo Kwon}
\address{Department of Mathematics \\Yeungnam University \\Kyongsan, 712-749, Korea}
\email{ysookwon@yu.ac.kr}
\thanks{}

\author{Jaeun Lee}
\address{Department of Mathematics \\Yeungnam University \\Kyongsan, 712-749, Korea}
\email{julee@yu.ac.kr}

\subjclass[2000]{Primary 05C50}

\keywords{total domination numbers, total domination vertex critical graphs, maximal degree.}

\maketitle

\begin{abstract}
The existence problem of the total domination vertex critical graphs
has been studied in a series of articles.
%~\cite{CS, GHHM, HLM, HaR, HeR, LM, MR, MR2, RM, RS}.
The aim of the present article is twofold. First, we settle the
existence problem with respect to the parities of the total
domination number $m$ and the maximum degree $\Delta$ : for even
$m$ except $m=4$, there is no $m$-$\gamma_t$-critical graph
regardless of the parity of $\Delta$;  for $m=4$ or odd $m \ge 3$
and for even $\Delta$, an $m$-$\gamma_t$-critical graph exists if
and only if $\Delta \ge 2 \lfloor \frac{m-1}{2}\rfloor$; for $m=4$
or odd $m \ge 3$ and for odd $\Delta$, if $\Delta \ge 2\lfloor
\frac{m-1}{2}\rfloor +7$, then $m$-$\gamma_t$-critical graphs
exist, if  $\Delta < 2\lfloor \frac{m-1}{2}\rfloor$, then
$m$-$\gamma_t$-critical graphs do not exist. The only remaining
open cases are $\Delta = 2\lfloor \frac{m-1}{2}\rfloor +k$, $k=1,
3, 5$. Second, we study these remaining open cases when $m=4$ or
odd $m \ge 9$. As the previously known result for $m =
3$~\cite{GHHM, CS}, we also show that for $\Delta(G)= 3, 5, 7$,
there is no $4$-$\gamma_{t}$-critical graph of order
$\Delta(G)+4$. On the contrary, it is shown that for odd $m \ge 9$
there exists an $m$-$\gamma_t$-critical graph for all $\Delta \ge
m-1$.
\end{abstract}

\section{Introduction} \label{intr}

A domination and its variations in graph theory have been studied widely and extensively
because of its rich applications~\cite{GHHM, HM, HMM, MR}.
Two books by Haynes, Hedetniemi and Slater provide a well written survey on this subject~\cite{HHS, HHS2}.
We refer to~\cite{HHS} for notation and general terminology.

Let $G=(V(G),E(G))$ be a simple graph of order $n(G)$.
The minimum degree and the maximum degree of a graph $G$ are denoted by $\delta(G)$ and
$\Delta(G)$, respectively.
A subset $S\subseteq V$ is \emph{a dominating  set} of $G$ if every
vertex not in $S$ is adjacent to a vertex in $S$. The
\emph{domination number} of $G$, denoted by $\gamma(G)$, is the minimum
cardinality of dominating sets. A subset $S\subseteq V$ is \emph{a total dominating set} of $G$ if every vertex of $G$ is adjacent to
a vertex in $S$. The \emph{total domination number} of $G$, denoted
by $\gamma_{t}(G)$, is the minimum cardinality of total
dominating sets. A total dominating set of cardinality
$\gamma_{t}(G)$ is called a \emph{$\gamma_{t}(G)$-set}.

Goddard et al. introduced the concept of total domination critical
graphs~\cite{GHHM}. A graph $G$ with no isolated vertex is
\emph{total domination vertex critical} if for any vertex $v$ of
$G$ that is not adjacent to a leaf, a vertex of degree one, the
total domination number of $G-v$ is less than the total domination
number of $G$. Such a graph is said to be
\emph{$\gamma_{t}$-critical} or \emph{$m$-$\gamma_{t}$-critical}
if its total domination number is $m$.
% Since adding leaves will
% increase the total domination number arbitrary, we will assume
% $\delta(G)\ge 2$.
It is well known that the order of \emph{$m$-$\gamma_{t}$}-critical graph $G$  is at least $\Delta(G)+m$. So, they suggested the following classification
problem of the total domination critical graphs.

\begin{open} [\cite{GHHM}]\label{open1}
Characterize $m$-$\gamma_{t}$-critical graphs $G$ with order
$\Delta(G)+m$.
\end{open}

There have been a series of articles regarding this problem. Mojdeh and Rad found $3$-$\gamma_{t}$-critical graphs of
order $3 + \Delta(G)$ for any even $\Delta(G)$ and showed that
there is no $3$-$\gamma_{t}$-critical graph $G$ of order $3 + \Delta(G)$ for
$\Delta(G)= 3, 5$~\cite{MR}. In~\cite{CS}, Chen and Sohn
proved that there is no $3$-$\gamma_{t}$-critical graph of order
$\Delta(G)+3$ with $\Delta(G)=7$ and $\delta(G)\geq 2$.
Furthermore, they gave a family of $3$-$\gamma_{t}$-critical
graphs of order $\Delta(G)+3$ with odd $\Delta(G)\geq 9$ and
$\delta(G)\geq 2$. Hassankhani and Rad proved that there is no $4$-$\gamma_{t}$-critical graph of
order $\Delta(G)+4$ with $\delta(G)\geq 2$ for $\Delta(G)= 3,
5$~\cite{HaR}. There have been several partial results on the
existence problem of the total domination vertex critical graphs
from different point of views.

The aim of the present article is twofold. First, we settle the
existence problem with respect to the parities of the total
domination number $m$ and the maximum degree $\Delta$ in Theorem~\ref{mainthm0}.

\begin{thm} \label{mainthm0} If there exists an $m$-$\gamma_t$-critical graph of order $\Delta+m$ for some $\Delta$ then $m=4$ or $m \ge 3$ is odd. Conversely, for any $m=4$ or odd $m \ge 3$,
\begin{enumerate}
\item[$(1)$] if  $\Delta < 2\lfloor\frac{m-1}{2}\rfloor$, then there exists no $m$-$\gamma_t$-critical graph of order $\Delta+m$.
\item[$(2)$]
 For any even $\Delta \ge 2\lfloor \frac{m-1}{2}\rfloor$, there exists an $m$-$\gamma_t$-critical graph of order $\Delta+m$.
\item[$(3)$] For any odd $\Delta \ge 2\lfloor \frac{m-1}{2}\rfloor
+7$, there exists an $m$-$\gamma_t$-critical graphs of order
$\Delta+m$.
 \end{enumerate}
\end{thm}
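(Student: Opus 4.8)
The plan is to prove the non-existence assertions (the opening sentence together with item~$(1)$) by a single structural reduction, and to prove items~$(2)$ and~$(3)$ by explicit constructions extending those already known for $m=3$ and $m=4$.

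For the reduction, let $G$ be an $m$-$\gamma_t$-critical graph of order $\Delta+m$ and let $v$ be a vertex of maximum degree. If $v$ is adjacent to a leaf one argues separately (such a leaf is itself not adjacent to a leaf, hence critical, and a short \emph{ad hoc} argument, or a reduction to the case $\delta(G)=1$, applies), so assume $v$ is not adjacent to a leaf and set $R=V(G)\setminus N[v]$, so that $|R|=m-1$. The first key point is that $\gamma_t(G-v)=m-1$: if $D$ is a $\gamma_t(G-v)$-set, then $D\cup\{v\}$ is a total dominating set of $G$ when $D\cap N(v)\neq\emptyset$, while $D\cup\{u\}$ is one for any $u\in N(v)$ when $D\subseteq R$; in either case $|D|\ge m-1$, and criticality forces $|D|<m$, so $|D|=m-1$ and the enlarged set is a $\gamma_t(G)$-set. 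Since $\Delta+m$ is the smallest possible order, adjoining that single vertex leaves no slack: when $D\subseteq R$ one obtains $V(G)=N[v]\cup D$ with $|D|=m-1=|R|$, hence $D=R$, and there is a parallel rigid count when $D\cap N(v)\neq\emptyset$. This reduces everything to understanding how an $(m-1)$-vertex total dominating set of $G-v$, inducing a subgraph with no isolated vertex, can be positioned relative to $N(v)$ and $R$.

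Item~$(1)$ then follows from a degree count: the $m-1$ vertices of $D$, each of degree at most $\Delta$ and each having a neighbour inside $D$, must dominate the $\Delta+m-1$ vertices of $N(v)\cup R$; organising this count through the components of $G[D]$ (all of order at least $2$) and through the forced overlaps of the dominating assignments inside the small set $R$ yields $\Delta\ge 2\lfloor\frac{m-1}{2}\rfloor$. (Alternatively, one may feed $n(G)=\Delta+m$ into the classical upper bounds on $\gamma_t$ in terms of $n$ and $\delta$, splitting on $\delta(G)\in\{1,2,\ge 3\}$.) The parity assertion --- that there is no $m$-$\gamma_t$-critical graph of order $\Delta+m$ when $m$ is even and $m\neq 4$ --- is the core difficulty. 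Here I would push the analysis of the triple $(D,N(v),R)$ much further: minimality of $D$ forces private-neighbour relations which, combined with $D=R$ (or its analogue) and the absence of isolated vertices in $G[R]$, make $R$ decompose into a union of edges, so that $m-1=|R|$ is even and $m$ is odd; the only escape is through a handful of small configurations, with $C_6$ as the prototype, and these account exactly for the exceptional value $m=4$. Upgrading ``there is such a structure'' to ``the structure is forced'' is the step I expect to be hardest, and it requires a careful case analysis on $|D\cap N(v)|$ and on the component sizes of $G[D]$, in the spirit of the $m=3$ treatments of~\cite{MR,CS} and the $m=4$ treatment of~\cite{HaR}.

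For the existence items the plan is constructive. For item~$(2)$ I would start from a known minimal critical graph at the least admissible value of $\Delta$ --- the Mojdeh and Rad graphs for $m=3$, the cycle $C_6$ for $m=4$, and suitable analogues for larger odd $m$ --- and then raise $\Delta$ in steps of $2$ by attaching a symmetric pair of new vertices to a distinguished edge, arranged so that a total dominating set of size $m$ persists, no smaller total dominating set appears, and every vertex not adjacent to a leaf remains critical; iterating reaches every even $\Delta\ge 2\lfloor\frac{m-1}{2}\rfloor$. For item~$(3)$ the obstruction is that this doubling step produces only even $\Delta$, so I would splice in, just once, a fixed gadget of the appropriate odd size whose contribution to the degree count is responsible for the additive constant $7$ (this is the mechanism behind the Chen and Sohn construction for $m=3$, $\Delta\ge 9$, in~\cite{CS}), and then resume the doubling step to cover all odd $\Delta\ge 2\lfloor\frac{m-1}{2}\rfloor+7$. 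In every case the verification that $\gamma_t=m$ and that the graph is $\gamma_t$-vertex-critical is a finite check carried out one vertex-class at a time.
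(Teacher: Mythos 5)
Your reduction to the triple $(D,N(v),R)$ points in the right direction, but the proposal leaves genuine gaps at exactly the places where the theorem's content lies. The parity assertion and item $(1)$ both rest on a structural classification that you only announce as a plan: you say you would ``push the analysis much further'' so that minimality forces $R$ to decompose into edges up to a handful of exceptional configurations, and you yourself flag this as the hardest step. That classification is the heart of the matter; in the paper it is Lemma~\ref{properties}, proved by explicit swap arguments (replacing two vertices of $S=V(G)-N[v]$ by $v$ and a suitable neighbour $x\in N(v)$ to produce a too-small total dominating set), which shows that every component of $G[V(G)-N[v]]$ is a $P_2$ or a $P_3$, that a $P_3$ component forces $G[V(G)-N[v]]=P_3$ (hence $m-1=3$, i.e.\ $m=4$, while otherwise $m-1$ is even), and moreover that $N(v)$ is partitioned into $m-1$ (resp.\ two) nonempty sets in the $P_2$ (resp.\ $P_3$) case --- and it is this partition, not a generic degree count over the components of $G[D]$, that yields $\Delta\ge 2\lfloor\frac{m-1}{2}\rfloor$. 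Without that lemma neither the opening parity claim nor item $(1)$ is established by your argument.

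The existence items are likewise plans rather than proofs. For $(2)$ you invoke ``suitable analogues for larger odd $m$'' as base graphs and an unverified step that raises $\Delta$ by $2$; the paper instead gives explicit graphs (a $K_{s,s}$ minus a perfect matching attached to $v$ and to a $P_2$ or $P_3$) and reaches all odd $m$ by vertex amalgamation with $(m-3)/2$ copies of $C_5$ via Lemma~\ref{vertex-amal}, with a direct verification that $\gamma_t=m$. For $(3)$, your ``splice in a fixed gadget once'' is in spirit the paper's amalgamation with the Chen--Sohn $3$-critical graph of maximum degree $9$, but as stated it cannot cover the case $m=4$ with odd $\Delta\ge 9$, which item $(3)$ includes: an amalgamation realizing $4=j+k-1$ would need a $2$-$\gamma_t$-critical factor, and no such graph exists, since $\gamma_t$ of a graph with no isolated vertex is always at least $2$ and so cannot drop below $2$ after deleting a vertex. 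The paper has to supply a separate explicit family for precisely this case (Theorem~\ref{mainthm4} and Figure~\ref{fig2}); your proposal has nothing in its place, and the verification that your doubling gadget preserves both $\gamma_t=m$ and vertex criticality is nowhere carried out.
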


Theorem~\ref{mainthm0} implies that the only remaining cases are
$\Delta = 2\lfloor\frac{m-1}{2}\rfloor +k$, $k=1, 3, 5$. Second,
we study these remaining open cases when $m=4$ or odd $m \ge 9$. When $m=4$, we show that
there is a $4$-$\gamma_{t}$-critical graph $G$ of order $\Delta(G) +4$ with
$\delta(G) \geq 2$ if and only if  $\Delta(G) = 2,4,6,8$   or  $\Delta(G) \ge 9$.
For odd $m \ge 9$, it is shown that there exists an $m$-$\gamma_{t}$-critical graph $G$ of order $\Delta(G) +m$ with $\delta(G) \ge 2$ if and only if   $\Delta(G) \ge
m-1$.

% Moreover, we characterize $\gamma_{t}$-critical
%graph of order $\Delta(G)+m$ which have at least one end-vertex.

The outline of this paper is as follows. In
section~\ref{preliminaries}, we review some definitions and
previous results. In section~\ref{intr}, some properties of
$m$-$\gamma_{t}$-critical graph of order $\Delta+m$ will be
given. In section~\ref{main}, we provide the proof of the
Theorem~\ref{mainthm0}. In section~\ref{m4}, we deal with the
remaining open cases for $m=4$ and $m \ge 9$.

 \section{Preliminaries} \label{preliminaries}

In this section, we review some definitions and previous results.
The degree, neighborhood and closed neighborhood of a vertex $v$ in a
graph $G$ are denoted by $d(v)$, $N(v)$ and $N[v]=N(v)\cup\{v\}$,
respectively. For a subset $S$ of $V$, we set $N(S)=\bigcup_{v\in S}N(v)$
and $N[S]=N(S)\cup S$. The graph induced by $S\subseteq V$ is denoted by $G[S]$.
The cycle, path and complete graph on $n$ vertices are
denoted by $C_{n}$,  $P_{n}$ and $K_{n}$, respectively. A vertex of degree one is called a \emph{leaf}.
A vertex $v$ of $G$ is called a \emph{support vertex} if it is adjacent to a
leaf. Let $S(G)$ be the set of all support vertices of $G$. The \emph{corona} of
a graph $H$, denoted by $cor(H)$, is the graph obtained from $H$ by adding a leaf adjacent
to each vertex of $H$.

For two graphs $G_1$ and $G_2$  and for two vertices $v_1 \in V(G_1)$ and $v_2 \in V(G_2)$, a \emph{vertex amalgamation} of $G_1$ and $G_2$ with two vertices $v_1$ and $v_2$ is a graph whose vertex set is $(V(G_1)-v_1)\cup (V(G_2)-v_2) \cup \{ v \}$ and edge set is \
\[ E(G_1-v_1) \cup E(G_2 - v_2) \cup \{ vu | v_1u \in E(G_1) \} \cup  \{ vw | v_2w \in E(G_2) \}. \]
The vertex amalgamation method is useful to construct a new
$\gamma_t$-critical graph by the following proposition.

\begin{prop}  [\cite{GHHM}] \label{vertex-amal-prop}
Let $F$ and $H$ be $j$-$\gamma_t$-critical and $k$-$\gamma_t$-critical graphs, respectively, with minimum degrees at least two and let $G$ be a graph formed by identifying a vertex of $F$ with a vertex of $H$. If $\gamma_t (G)=j+k-1$ then $G$ is also $\gamma_t$-critical.
\end{prop}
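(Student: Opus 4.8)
The plan is to verify the definition of $\gamma_t$-criticality for $G$ directly. First I would note that since $\delta(F),\delta(H)\ge 2$ the amalgam $G$ has $\delta(G)\ge 2$: the identified vertex, call it $w$, has degree $d_F(w)+d_H(w)\ge 4$, and every other vertex keeps its old degree. Hence $G$ has no leaves, so criticality amounts to showing $\gamma_t(G-v)<\gamma_t(G)=j+k-1$ for \emph{every} $v\in V(G)$; equivalently, since we are given $\gamma_t(G)=j+k-1$, it is enough to prove the uniform bound $\gamma_t(G-v)\le j+k-2$. I would then split into the cases $v=w$ and $v\neq w$.

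For $v=w$: deleting $w$ disconnects $G$ into the disjoint union $(F-w)\sqcup(H-w)$. Since $\delta(F),\delta(H)\ge 2$, neither piece has an isolated vertex, so $\gamma_t$ is defined on each and additive over the components, giving $\gamma_t(G-w)=\gamma_t(F-w)+\gamma_t(H-w)$. Because $\delta(F)\ge 2$ the vertex $w$ is adjacent to no leaf of $F$, so criticality of $F$ gives $\gamma_t(F-w)\le j-1$; likewise $\gamma_t(H-w)\le k-1$. Adding, $\gamma_t(G-w)\le j+k-2$, as wanted.

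For $v\neq w$: by the symmetry of the construction I may assume $v\in V(F)\setminus\{w\}$. I would take a $\gamma_t$-set $T$ of $F-v$ and a $\gamma_t$-set $S$ of $H-w$. As in the previous case, $\delta(F),\delta(H)\ge 2$ guarantees $F-v$ and $H-w$ have no isolated vertices and that $v$ (resp.\ $w$) is adjacent to no leaf of $F$ (resp.\ $H$), so $|T|\le j-1$ and $|S|\le k-1$; moreover $T$ and $S$ are disjoint, since they lie in $V(F)\setminus\{v\}$ and $V(H)\setminus\{w\}$, which share only $w$, and $w\notin S$. The crucial point — and the thing I would be most careful to pin down — is that $w$ is still a vertex of $F-v$, hence is totally dominated by $T$ from the $F$-side; this is exactly what frees me to use a total dominating set of $H-w$ (rather than of $H$) on the other side and still dominate everything, which is where the saving of one vertex comes from. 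It then remains to check that $T\cup S$ is a total dominating set of $G-v$: a vertex of $V(F)\setminus\{v,w\}$ has the same neighbourhood in $G-v$ as in $F-v$, so it has a neighbour in $T$; $w$ has a neighbour in $T$; a vertex of $V(H)\setminus\{w\}$ retains in $G-v$ all the neighbours it had in $H-w$, so it has a neighbour in $S$; and each vertex of $T$ (respectively $S$) has a neighbour in $T$ (respectively $S$) because $T$ (respectively $S$) is totally dominating in $F-v$ (respectively $H-w$). Hence $\gamma_t(G-v)\le|T|+|S|\le j+k-2$.

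I do not expect a real obstacle here: the argument is essentially bookkeeping. The one genuinely substantive observation is the one flagged above — that the identified vertex $w$ is dominated ``for free'' by the $F$-side set $T$ because $w$ survives the deletion of $v$ — since it is precisely this that produces a set of size $j+k-2$ rather than $j+k-1$. The only other thing to watch is to invoke $\delta(F),\delta(H)\ge 2$ at each point where one needs ``adjacent to no leaf'' (to apply criticality) or ``no isolated vertices'' (so that the relevant $\gamma_t$ values are defined and additive over components). I would also point out that the bound $\gamma_t(G-v)\le j+k-2$ itself needs no hypothesis on $\gamma_t(G)$; the assumption $\gamma_t(G)=j+k-1$ is used only to convert that bound into the strict inequality the definition requires.
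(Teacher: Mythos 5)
The paper itself offers no proof of this proposition: it is imported verbatim from \cite{GHHM} and used as a black box (e.g.\ inside the proof of Lemma~\ref{vertex-amal}), so there is no in-paper argument to compare yours against. Judged on its own, your proof is correct and complete. The two cases are the right decomposition: for $v=w$ you correctly use that $\delta(F),\delta(H)\ge 2$ prevents isolated vertices in $F-w$ and $H-w$, so $\gamma_t(G-w)\le\gamma_t(F-w)+\gamma_t(H-w)\le j+k-2$; for $v\ne w$ the set $T\cup S$ with $T$ a $\gamma_t(F-v)$-set and $S$ a $\gamma_t(H-w)$-set does totally dominate $G-v$, and you correctly identify the crux, namely that $w$ survives the deletion of $v$ and is dominated from the $F$-side, which is what lets you use a set for $H-w$ rather than $H$ and save one vertex. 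You also invoke $\delta\ge 2$ at exactly the points where it is needed (no leaves, hence criticality applies to every vertex and the deleted graphs have no isolated vertices), and you are right that the hypothesis $\gamma_t(G)=j+k-1$ enters only to turn the bound $j+k-2$ into a strict drop. Two inessential remarks: the disjointness of $T$ and $S$ is not needed for the bound $|T\cup S|\le |T|+|S|$; and by Lemma~\ref{no-neighbor} the inequalities $\gamma_t(F-v)\le j-1$, $\gamma_t(H-w)\le k-1$ are in fact equalities, though the inequality is all you use.
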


\begin{lem} \label{vertex-amal}
For any $i=1,2$, let $G_i$ be an $m_i$-$\gamma_t$-critical graph
$G_i$ of order $\Delta (G_i)+m_i$ with $\delta(G_i) \ge 2$ and let
$v_i \in V(G_i)$ be a vertex of maximum degree in $G_i$. If each
component of $G[V(G_i)-N[v_i]]$ is a $P_2$ then  the vertex
amalgamation $G$ of $G_1$ and $G_2$ with $v_1$ and $v_2$ is an
$(m_1+m_2-1)$-$\gamma_t$-critical graph  of order $\Delta
(G)+m_1+m_2-1$, where $\Delta(G) = \Delta(G_1)+\Delta(G_2)$.
\end{lem}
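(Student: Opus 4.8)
The plan is to verify the three ingredients needed to apply Proposition~\ref{vertex-amal-prop}: first that $\delta(G)\ge 2$, second that $\Delta(G)=\Delta(G_1)+\Delta(G_2)$ with this degree realized at the amalgamation vertex $v$, and third — the only substantive point — that $\gamma_t(G)=m_1+m_2-1$. The order count is then immediate: $G$ has $|V(G_1)|+|V(G_2)|-1=(\Delta(G_1)+m_1)+(\Delta(G_2)+m_2)-1=\Delta(G)+(m_1+m_2-1)$ vertices, and Proposition~\ref{vertex-amal-prop} gives that $G$ is $\gamma_t$-critical. The degree and minimum-degree claims are routine: the amalgamation vertex $v$ inherits all neighbors of $v_1$ in $G_1$ and of $v_2$ in $G_2$, and since these neighbor sets are disjoint in $G$, $d_G(v)=d_{G_1}(v_1)+d_{G_2}(v_2)=\Delta(G_1)+\Delta(G_2)$; every other vertex keeps its old degree, which is $\ge 2$, and one checks $\Delta(G_1)+\Delta(G_2)$ is the maximum since each original vertex had degree at most $\Delta(G_i)$.

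For the total domination number, I would first show $\gamma_t(G)\le m_1+m_2-1$ by exhibiting a total dominating set. Here is where the hypothesis that each component of $G_i[V(G_i)-N[v_i]]$ is a $P_2$ does its work. Since $v_i$ has maximum degree $\Delta(G_i)$ in an $m_i$-$\gamma_t$-critical graph of order $\Delta(G_i)+m_i$, the set $V(G_i)\setminus N[v_i]$ has exactly $m_i-1$ vertices, so it consists of $(m_i-1)/2$ copies of $P_2$ — in particular $m_i$ is odd (or the piece is empty, handled similarly), matching the parity constraints in Theorem~\ref{mainthm0}. Now I claim $\{v_i\}\cup(V(G_i)\setminus N[v_i])$ together with one more neighbor of $v_i$ is a total dominating set of $G_i$ of size $m_i+1$; more to the point, in the amalgam one takes $\{v\}$, plus the two copies of $\bigl(V(G_i)\setminus N[v_i]\bigr)$, plus a single common neighbor $u$ of $v$ (coming from either side). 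The $P_2$-components are totally dominated internally (each endpoint dominates the other); $v$ dominates all of $N(v)=N_{G_1}(v_1)\cup N_{G_2}(v_2)$; $u$ dominates $v$; and every vertex of $V(G_i)\setminus N[v_i]$ is dominated. That set has size $1+1+(m_1-1)+(m_2-1)=m_1+m_2$, which is one too many, so the sharper step is to observe that in each $G_i$ the criticality structure lets one cover $N[v_i]\setminus\{v_i\}$ and the $P_2$'s using only $m_i-1$ vertices that all lie outside $N[v_i]$ except for a single shared vertex at $v$; concretely, one takes a $\gamma_t$-set of $G_i-v_i'$ for a suitable leaf-free neighbor, transported across. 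I would therefore build the bound by taking, on each side, a $\gamma_t(G_i)$-set chosen to contain $v_i$ (such a set exists because $v_i$ must be dominated and, by a standard argument, can be taken into the set), deleting $v_i$ from one copy, and identifying the two copies of $v_i$ at $v$; this yields a set of size $m_1+(m_2-1)=m_1+m_2-1$ which one checks is totally dominating in $G$ using that $v$ is adjacent in $G$ to everything both $v_1$ and $v_2$ dominated.

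For the reverse inequality $\gamma_t(G)\ge m_1+m_2-1$, let $D$ be a $\gamma_t(G)$-set. The key is that $D$ restricted to each side must essentially totally dominate $G_i$: set $D_i=(D\cap V(G_i))\cup\{v_i\}$ (pulling $v$ back to $v_i$ on side $i$, and always throwing in $v_i$). Any vertex of $G_i$ other than possibly $v_i$ is dominated within $G_i$ by $D$, because its $G$-neighbors all lie in $V(G_i)$ except that a neighbor of $v_i$ might be dominated "through $v$" — but $v$ pulls back to $v_i\in D_i$, so $D_i$ still dominates it; and $v_i$ is dominated by $D_i$ since $G_i$ has no isolated vertices and $\delta(G_i)\ge 2$ forces a neighbor, which after adding $v_i$ to both sides can be arranged. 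Hence $D_i$ is a total dominating set of $G_i$, so $|D_i|\ge m_i$, i.e. $|D\cap V(G_i)|\ge m_i-1$ (the $-1$ absorbing the possibly-added $v_i$). Since $V(G_1)$ and $V(G_2)$ overlap only in $\{v\}$, summing gives $|D|\ge (m_1-1)+(m_2-1)+[\,v\in D\,]$, and a short case analysis on whether $v\in D$ — using that if $v\notin D$ then $v$ still needs a dominator, forcing an extra vertex on one side — yields $|D|\ge m_1+m_2-1$.

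The main obstacle is the bookkeeping around the shared vertex $v$ in both directions: ensuring the constructed total dominating set of size $m_1+m_2-1$ genuinely dominates the vertices of $N[v_i]$ that were formerly dominated "from the $v_i$ side," and, in the lower bound, handling the $v\in D$ versus $v\notin D$ dichotomy without double-counting. The $P_2$-component hypothesis is exactly what makes the off-$N[v_i]$ part rigid enough that no total dominating set can be smaller, and I expect it to enter the lower bound argument through the observation that each $P_2$ needs its own contribution of total-dominating vertices that cannot be shared across the amalgamation.
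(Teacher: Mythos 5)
Your overall strategy --- compute $\gamma_t(G)=m_1+m_2-1$ and then invoke Proposition~\ref{vertex-amal-prop} together with the order count --- is the same as the paper's, but both halves of the computation have genuine gaps. For the upper bound, after (correctly) finding that $\{v\}\cup(V(G)-N[v])\cup\{u\}$ has $m_1+m_2$ vertices, you fall back on the claim that each $G_i$ has a $\gamma_t(G_i)$-set containing $v_i$, justified only ``by a standard argument.'' There is no such standard argument: it is false in general that every vertex lies in some minimum total dominating set (an end vertex of $P_4$ lies in none). The claim can be rescued here, but only through the structure you never invoke: by Lemma~\ref{properties}(3), $N(v_i)$ is the disjoint union of the nonempty sets $N(u_j)-w_j$ and $N(w_j)-u_j$ over the $P_2$-components $u_jw_j$, so $\{v_i\}$ plus one vertex from each of these sets is a $\gamma_t(G_i)$-set containing $v_i$. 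The paper avoids this detour entirely: since $V(G_i)-N[v_i]$ is the $\gamma_t(G_i-v_i)$-set (Lemma~\ref{no-neighbor}), the set $(V(G)-N[v])\cup\{u\}$ with $u\in N(v)$ already totally dominates $G$ --- each $P_2$ dominates itself and, collectively, all of $N(v)$, while $u$ dominates $v$ --- and it has exactly $(m_1-1)+(m_2-1)+1=m_1+m_2-1$ vertices; the point you missed is that $v$ itself need not be put in the set.

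For the lower bound, your central claim that $D_i=(D\cap V(G_i))\cup\{v_i\}$ is a total dominating set of $G_i$ for \emph{both} $i$ is not established and can fail on one side: total domination of $v_i$ requires a neighbor of $v_i$ in $D_i$, and the vertex of $D$ dominating $v$ lies in only one of $V(G_1)$, $V(G_2)$, so on the other side $D\cap V(G_i)$ may contain no neighbor of $v_i$ (adding $v_i$ itself does not help). Thus this route gives $|D\cap V(G_i)|\ge m_i-1$ only on one side, and your ``extra vertex when $v\notin D$'' is not automatic either, since the dominator of $v$ could be one of the $m_i-1$ vertices already counted; ruling that out needs the second part of Lemma~\ref{no-neighbor} (a $\gamma_t(G_i-v_i)$-set contains no neighbor of $v_i$), which you never use. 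The paper applies your $D_i$-type argument only on the side containing a dominator of $v$ (after a WLOG), and on the other side counts directly: dominating the $m_2-1$ vertices of $V(G_2)-N[v_2]$ forces $|D\cap (V(G_2)-\{v_2\})|\ge m_2-1$, because by Lemma~\ref{properties}(3) the neighborhoods of the ends of the $P_2$-components are pairwise disjoint and avoid $v$. Your closing remark that the $P_2$-hypothesis should enter through exactly such a disjointness count points in the right direction, but the proposal as written does not carry it out.
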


\begin{proof}
Let $v$ be the vertex of $G$ whose degree is $\Delta(G) = \Delta(G_1)+\Delta(G_2)$, namely, $v$ is an amalgamated vertex.
 For any $u \in N(v)$, $(V(G)-N[v])\cup \{u \}$ is a total dominating set of $G$ and whose cardinality is $m_1+m_2-1$. Hence $\gamma_t(G) \le m_1 + m_2 -1$. Let $S$ be a $\gamma_t(G)$-set of $G$.
Suppose $v \in S$. Then, $v$ is adjacent to a vertex $u \in S-\{ v \}$. Without loss of generality, we may assume that $u \in V(G_1)$. Then, $(V(G_1)\cap (S-\{ v \}) )\cup \{v_1 \})$ is a total dominating set of $G_1$. Furthermore, for $S$ to dominate $G_2 -N[v]$, we have $|V(G_2)\cap  (S-\{ v \})| \ge m_2-1$. Hence, $|S| \ge m_1 + m_2 -1$, which means that  $\gamma_t(G)=m_1+m_2-1$. By Proposition \ref{vertex-amal-prop}, $G$ is an
$(m_1+m_2-1)$-$\gamma_t$-critical graph  of order $\Delta
(G)+m_1+m_2-1$.
\end{proof}

The following two lemmas are known results in~\cite{GHHM} which
will be used in this paper.

\begin{lem} [\cite{GHHM}] \label{no-neighbor} If $G$ is a
$\gamma_{t}$-critical graph, then
$\gamma_{t}(G-v)=\gamma_{t}(G)-1$ for every $v\in V-S(G)$.
Furthermore, a $\gamma_{t}(G-v)$-set contains no neighbor of $v$. \label{lem1}
\end{lem}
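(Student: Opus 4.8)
The plan is to prove the two assertions separately, each by a short extension-or-restriction argument on total dominating sets. A preliminary observation is needed to make everything well defined: for $v \in V - S(G)$ the graph $G - v$ has no isolated vertex, since a vertex $u$ isolated in $G - v$ would satisfy $N_G(u) = \{v\}$, making $u$ a leaf and $v$ a support vertex, contrary to $v \notin S(G)$. Hence $\gamma_t(G-v)$ is well defined, and moreover $G$ itself has no isolated vertex, so $v$ has at least one neighbor.

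For the equality $\gamma_t(G-v) = \gamma_t(G)-1$, the definition of $\gamma_t$-criticality gives $\gamma_t(G-v) < \gamma_t(G)$, i.e. $\gamma_t(G-v) \le \gamma_t(G)-1$, so it suffices to prove the reverse inequality. First I would take a $\gamma_t(G-v)$-set $S'$ and any neighbor $w$ of $v$ in $G$, and check that $S' \cup \{w\}$ is a total dominating set of $G$: every vertex of $G - v$ already has a neighbor in $S'$ (adjacencies not at $v$ are unchanged in passing from $G-v$ to $G$), the vertex $v$ is dominated by $w$, and $w$ itself is dominated because $w \in V(G-v)$ has a neighbor in $S'$. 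This yields $\gamma_t(G) \le |S'| + 1 = \gamma_t(G-v) + 1$, hence $\gamma_t(G-v) \ge \gamma_t(G)-1$, and combining the two bounds gives equality.

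For the second assertion, I would argue by contradiction: suppose some $\gamma_t(G-v)$-set $S'$ contains a neighbor $w$ of $v$. Viewing $S'$ as a subset of $V(G)$, every vertex of $G - v$ still has a neighbor in $S'$, and $v$ is dominated by $w \in S'$, so $S'$ is already a total dominating set of $G$. Then $\gamma_t(G) \le |S'| = \gamma_t(G-v) = \gamma_t(G) - 1$, a contradiction. Therefore no $\gamma_t(G-v)$-set meets $N(v)$.

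The only point that requires any care — and the nearest thing to an obstacle — is the opening remark that $G-v$ has no isolated vertex, which is exactly where the hypothesis $v \notin S(G)$ is used and which is needed for $\gamma_t(G-v)$ to make sense; everything after that is a routine ``adjoin one neighbor of $v$'' / ``it was already a total dominating set'' manipulation.
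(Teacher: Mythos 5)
Your proof is correct; the paper itself states this lemma as a known result quoted from~\cite{GHHM} and gives no proof, and your argument is exactly the standard one: adjoin a neighbor $w$ of $v$ to a $\gamma_t(G-v)$-set to get $\gamma_t(G)\le\gamma_t(G-v)+1$, and note that a $\gamma_t(G-v)$-set meeting $N(v)$ would already totally dominate $G$, contradicting $\gamma_t(G-v)=\gamma_t(G)-1$. Your preliminary observation that $v\notin S(G)$ guarantees $G-v$ has no isolated vertex (so $\gamma_t(G-v)$ is defined) is the right point to flag, and the whole argument is complete as written.
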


\begin{lem} [~\cite{GHHM}] If a graph $G$ has
nonadjacent vertices $u$ and $v$ such that  $v\notin S(G)$ and $N(u)\subseteq N(v)$, then $G$ is not $\gamma_{t}$-critical. \label{lem2}
\end{lem}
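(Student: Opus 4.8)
The plan is to derive a contradiction from Lemma~\ref{no-neighbor} applied to the vertex $v$. Assume $G$ is $\gamma_t$-critical. Since $v \notin S(G)$, Lemma~\ref{no-neighbor} applies to $v$ and yields $\gamma_t(G-v) = \gamma_t(G)-1$ together with a $\gamma_t(G-v)$-set $S$ containing no neighbor of $v$; in particular $S \cap N(v) = \emptyset$ and $|S| = \gamma_t(G)-1$.

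Next I would use the two structural hypotheses on $u$. First, from $N(u) \subseteq N(v)$ and $S \cap N(v) = \emptyset$ we get $S \cap N(u) = \emptyset$. Second, since $u$ and $v$ are nonadjacent, $u$ is still a vertex of $G-v$ and none of its neighbors is deleted when $v$ is removed, so the neighborhood of $u$ in $G-v$ is exactly $N(u)$. As $S$ is a total dominating set of $G-v$, the vertex $u$ must be adjacent in $G-v$ to some vertex of $S$, i.e. $N(u) \cap S \neq \emptyset$. This contradicts $S \cap N(u) = \emptyset$, and therefore $G$ is not $\gamma_t$-critical.

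There is essentially no obstacle here: the statement is a direct consequence of Lemma~\ref{no-neighbor}. The only subtle point, and the place where the nonadjacency of $u$ and $v$ is genuinely needed, is the observation that deleting $v$ leaves $N(u)$ unchanged, which is what lets us read the total-domination condition on $G-v$ as a statement purely about $S$ and $N(u)$. It is also worth recording that $N(u)\neq\emptyset$: $G$ has no isolated vertex, and $u$ remains non-isolated in $G-v$ because it is not adjacent to $v$, so the requirement $N(u)\cap S\neq\emptyset$ is not vacuous.
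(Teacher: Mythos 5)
Your proof is correct and is exactly the standard argument: the paper itself states this lemma as a quoted result from \cite{GHHM} without proof, and the cited proof is the same deduction from Lemma~\ref{no-neighbor}, namely that a $\gamma_{t}(G-v)$-set must totally dominate $u$ through $N(u)\subseteq N(v)$ while containing no neighbor of $v$. Your added remarks on why $N_{G-v}(u)=N(u)$ (nonadjacency) and why the condition is not vacuous are exactly the right points of care, so nothing is missing.
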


%\begin{lem} [\cite{GHHM}] Let $G$ be a
%connected graph of order at least $3$ with at least one leaf. Then
%$G$ is $k$-$\gamma_{t}$-critical if and only if $G=cor(H)$ for
%some connected  graph $H$ of order $k$ with $\delta(H)\geq 2$. \label{lem3}\end{lem}

Mojdeh and Rad~\cite{MR} found the following lemma about a total domination vertex
critical graph $G$ of order $\Delta(G)+\gamma_{t}(G)$ with
$\delta(G)\geq 2$.

\begin{lem} [~\cite{MR}] There is no $3$-$\gamma_{t}$-critical graph
$G$ of order $\Delta(G)+3$ with $\Delta(G)=3$, $5$ and $\delta(G)\geq
2$. \label{lem4}\end{lem}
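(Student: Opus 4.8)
The plan starts from the observation that $\delta(G)\ge 2$ forces $G$ to have no leaf, hence no support vertex, so criticality applies at \emph{every} vertex: $\gamma_t(G-w)=\gamma_t(G)-1=2$ for all $w\in V(G)$. For such a $w$, a $\gamma_t(G-w)$-set has size $2$ and is therefore an edge $\{a_w,b_w\}$; by Lemma~\ref{lem1} it contains no neighbour of $w$, and it dominates $V(G)\setminus\{w\}$ but not $w$, so in fact $N(a_w)\cup N(b_w)=V(G)\setminus\{w\}$ while $w$ is adjacent to neither $a_w$ nor $b_w$. This edge attached to each deleted vertex $w$ is the main tool.

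Next I would fix a vertex $v$ of maximum degree $\Delta\in\{3,5\}$. Since $|V(G)|=\Delta+3$, there are exactly two vertices outside $N[v]$, say $x$ and $y$. A $\gamma_t(G-v)$-set avoids $N(v)$ and has size $2$, so it must be precisely $\{x,y\}$; hence $x\sim y$, $N(x)\cup N(y)=V(G)\setminus\{v\}$, and $x\not\sim v\not\sim y$. In particular the only possible neighbours of $x$ (resp. of $y$) are $y$ (resp. $x$) together with vertices of $N(v)$, so $d(x)=1+|N(x)\cap N(v)|$ and $d(y)=1+|N(y)\cap N(v)|$; and because $\{x,y\}$ dominates all of $N(v)$ we get $d(x)+d(y)\ge \Delta+2$. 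Together with $2\le d(x),d(y)\le\Delta$ this forces the unordered pair $\{d(x),d(y)\}$ to be one of $\{2,3\},\{3,3\}$ when $\Delta=3$, and one of $\{2,5\},\{3,4\},\{3,5\},\{4,4\},\{4,5\},\{5,5\}$ when $\Delta=5$.

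For each type one knows (up to relabelling) exactly how $x$ and $y$ attach to $N(v)$, and $G[N(v)]$ is then constrained by the degree cap $\Delta$; in each branch I would reach a contradiction in one of two ways. Either two adjacent vertices of $G$ are seen to dominate all of $V(G)$, contradicting $\gamma_t(G)=3$, or some vertex $w$ is found for which $G-w$ has no dominating edge, so $\gamma_t(G-w)\ge 3$, contradicting criticality; Lemma~\ref{lem2} disposes of several configurations at once. For $\Delta=3$ this is short: in type $\{3,3\}$ the common neighbour $u$ of $x$ and $y$ satisfies $N(v)\cup N(u)=V(G)$, so $\{v,u\}$ totally dominates $G$; in type $\{2,3\}$ the adjacencies and degree bounds force all remaining non-edges, so that $G-u\cong C_5$ for the vertex $u\in N(v)$ adjacent only to the degree-$2$ endpoint, and $\gamma_t(C_5)=3\ne 2$.

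The routine bulk of the work, and the main obstacle, is the case $\Delta=5$: there $\{x,y\}$ must cover five vertices of $N(v)$ and $G[N(v)]$ has enough freedom that several of the six degree-types split into subcases. Nonetheless the argument is finite and elementary; the care lies in organising the subcases so that each terminates in a dominating pair of $G$ or in a vertex whose removal does not lower $\gamma_t$. (For $m=3$ this is the content of~\cite{MR}; the heavier but analogous bookkeeping for $m=4$ with $\Delta=3,5$ is in~\cite{HaR}, while $\Delta=7$ is treated in Section~\ref{m4}.)
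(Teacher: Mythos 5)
The paper does not actually prove this lemma --- it is quoted verbatim from \cite{MR} --- so the only internal benchmark is the analogous case analysis the authors carry out for $m=4$ in Theorem~\ref{mainthm3}, and your general strategy (every deleted vertex leaves behind a dominating edge avoiding its neighbourhood, $V(G)-N[v]=\{x,y\}$ is exactly that edge for a maximum-degree vertex $v$, then a finite analysis of how $x,y$ attach to $N(v)$) is precisely that machinery, so the route is the right one. You do miss a reduction that the paper records as Lemma~\ref{properties}(3): $N(x)\cap N(v)$ and $N(y)\cap N(v)$ must be \emph{disjoint}, since a common neighbour $z$ would make $\{v,z\}$ a total dominating set of size $2$; hence $d(x)+d(y)=\Delta+2$ exactly, and for $\Delta=5$ only the attachment patterns $1+4$ and $2+3$ (degree types $\{2,5\}$ and $\{3,4\}$) survive, not six.

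Two concrete gaps remain. First, the one branch you work out in detail, $\Delta=3$ with degrees $\{2,3\}$, is wrong as written. With $N(v)=\{a,b,c\}$, $x$ adjacent to $a$ and $y$ adjacent to $b,c$, the cap $\Delta=3$ does \emph{not} force all remaining non-edges: each of $a,b,c$ has one spare degree, so one extra edge inside $\{a,b,c\}$ may be present, and these subcases must be treated. Moreover the vertex ``adjacent only to the degree-$2$ endpoint'' is $a$, and $G-a$ is a $4$-cycle with a pendant vertex, which \emph{does} admit a dominating edge ($\{y,b\}$), so your claimed contradiction does not occur at that deletion. The $C_5$ contradiction in fact appears when one deletes a vertex of $N(y)\cap N(v)$: for every admissible choice of the extra edge, $G-b$ or $G-c$ is a $5$-cycle (and in the edge-free configuration one can instead invoke Lemma~\ref{lem2}, since $b\not\sim c$ and $N(b)=N(c)=\{v,y\}$). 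This is repairable, but the step as stated fails. Second, the $\Delta=5$ half, which you yourself identify as the bulk, is only asserted to be finite and routine; given the slip in the easy case, that assertion does not yet constitute a proof --- the remaining subcases (now just the $1+4$ and $2+3$ patterns) need to be run through explicitly, in the style of Cases 2.1--2.2 in the proof of Theorem~\ref{mainthm3}.
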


%In~\cite{CS}, the authors proved that there is no
%$3$-$\gamma_{t}$-critical graph of order $\Delta(G)+3$ with $\Delta(G)=7$ and $\delta(G)\geq 2$. They gave a family of
%$3$-$\gamma_{t}$-critical graphs of order $\Delta(G)+3$ with
%$\Delta(G)\geq 9$, $\delta(G)\geq 2$ and $\Delta(G)$ odd.

\section{Some properties of $\gamma_{t}$-critical graph $G$ with $\gamma_{t}(G)=n-\Delta(G)$} \label{intro}

In this section, we find some properties of
$\gamma_{t}$-critical graph $G$ with $\gamma_{t}(G)=n-\Delta(G)$.
Throughout the section, we assume the following notation
for $\gamma_{t}$-critical graph $G$ with $\gamma_{t}(G)=n-\Delta(G)$ and $\delta(G) \ge 2$
unless stated otherwise. Let $v$ be a vertex whose degree
is the maximum degree $\Delta(G)$. Since $G$ is $\gamma_{t}$-critical, it
follows that $\gamma_{t}(G-v)=\gamma_{t}(G)-1=n-\Delta(G)-1$. Let
$S$ be a $\gamma_{t}$-set of $G - v$. Then, $S = V(G)-N[v]$  by
Lemma~\ref{no-neighbor}.
Let $H_{1},H_{2},\cdots, H_{t}$ be the
components of $G[S]$ and let $V(H_i) = S_i$ for all
$i=1,2,\ldots,t$.
%For simplification, we may not use the set symbol if it is clear or for $H_i$.
We find the following two lemmas regarding
the $\gamma_{t}$-critical graph $G$ with $\gamma_{t}(G)=n-\Delta(G)$ and $\delta(G) \ge 2$.

\begin{lem} \label{connected} Every $\gamma_{t}$-critical graph $G$ with
$\gamma_{t}(G)=n-\Delta(G)$ and $\delta(G) \ge 2$ is connected.
\begin{proof}
Suppose that $G$ is not connected. Then, at least one of
$H_{1}$, $H_{2}$, $\cdots$, $H_{t}$ is also a connected component of $G$,
say $H_i$ is such a component. Since $\delta(G) \ge 2$, $|V(H_i )|
=|S_i | \ge 3$. Choose a spanning tree $T$ of $H_i$ and one
end vertex $u$ of $T$. Then, $S_i - u$ is a total dominating set
of $H_i$ and furthermore $S- u$ is a total dominating set of
$G-v$, which is a contradiction.
\end{proof}
 \end{lem}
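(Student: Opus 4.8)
The plan is to argue by contradiction, leaning on the rigidity that the hypothesis $\gamma_{t}(G)=n-\Delta(G)$ imposes on $G$. With the running notation of the section, write $m=\gamma_t(G)$ and fix $v$ with $d(v)=\Delta(G)$. Since $\delta(G)\ge 2$ the graph has no leaves, so $v\notin S(G)$, and Lemma~\ref{no-neighbor} yields $\gamma_t(G-v)=m-1=n-\Delta(G)-1$ together with a $\gamma_t(G-v)$-set $S$ containing no neighbour of $v$. Because $|S|=n-\Delta(G)-1=|V(G)|-|N[v]|$ and $S\cap N[v]=\emptyset$, we are forced to have $S=V(G)-N[v]$ exactly; let $H_1,\dots,H_t$ be the components of $G[S]$.

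The first step is to locate a ``free'' component. If $G$ were disconnected, then since every vertex of $N(v)$ lies in the component of $G$ containing $v$, at least one component $C$ of $G$ is entirely contained in $S$; being connected and contained in $S$, it is one of the $H_i$. Now $\delta(G)\ge 2$ and $C=H_i$ is a full component of $G$, so $\delta(H_i)\ge 2$ and hence $|V(H_i)|=|S_i|\ge 3$. The second step is to shrink $S$: choose a spanning tree $T$ of $H_i$ and an end vertex $u$ of $T$, and show $S-u$ is still a total dominating set of $G-v$. Every vertex of $H_i$ other than $u$ keeps a neighbour in $S_i-u$ because $\delta(H_i)\ge 2$ gives it a neighbour besides $u$; the vertex $u$ is still dominated by its unique $T$-neighbour, which lies in $S_i-u$; and no vertex outside $H_i$ is affected at all, since $u$, lying in a component of $G$ contained in $S$, has no neighbour outside $H_i$. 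Thus $|S-u|=m-2<\gamma_t(G-v)$, a contradiction, and $G$ must be connected.

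The only point requiring genuine care — the (minor) main obstacle — is the verification in the second step that deleting an end vertex of a spanning tree of $H_i$ cannot break total domination of any vertex of $G-v$; this is precisely where one uses both $\delta(G)\ge 2$ and the fact that $H_i$ is a whole connected component of $G$, so that $u$ is invisible to $N(v)$. Everything else is bookkeeping with the identity $S=V(G)-N[v]$.
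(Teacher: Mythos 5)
Your proposal is correct and follows essentially the same argument as the paper: assuming disconnectedness, you identify a component of $G$ lying entirely in $S$ (hence some $H_i$), use $\delta(G)\ge 2$ to get $|S_i|\ge 3$, and delete an end vertex of a spanning tree of $H_i$ to produce a total dominating set of $G-v$ smaller than $\gamma_t(G-v)$, a contradiction. The extra verifications you supply (that $u$ is invisible to $N(v)$ and that the remaining vertices of $H_i$ stay dominated) are just a more explicit rendering of the paper's terse final sentence.
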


\begin{lem} \label{properties} Let $G$ be a $\gamma_{t}$-critical graph with
$\gamma_{t}(G)=n-\Delta(G)$ and $\delta(G) \ge 2$. Then,
\begin{enumerate}
 \item[$(1)$] $H_i$ is a $P_{2}$ or a $P_{3}$ for $i=1,2, \cdots,t$.
 \item[$(2)$] If $G[S]$ contains a $P_{3}$ component, then
 $G[S]=P_{3}$. Furthermore, for the $P_3 = u_1 u_2 u_3$, $N(u_2) \cap N(v) = \emptyset$ and $N(v)$ is a disjoint union of nonempty sets $N(u_1)-u_2$ and
 $N(u_3)-u_2$.
 \item[$(3)$] If $H_i$ is a $P_2$ for all $i=1,2, \ldots,t$, $i.e.,$ $H_i = u_iw_i$, then for any $u \in S$, $N(u) \cap N(v) \neq \emptyset$ and
 $N(v)$ is a disjoint union of $N(u_1)-w_1,N(w_1)-u_1, \ldots ,N(u_t)-w_t,N(w_t)-u_t $.  \\
 \end{enumerate}
\end{lem}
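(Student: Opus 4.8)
The plan is to exploit the fact that, with $v$ a maximum-degree vertex, $S = V(G)-N[v]$ is a $\gamma_t(G-v)$-set, and by Lemma~\ref{no-neighbor} this set contains no neighbor of $v$; moreover $|S| = n - \Delta(G) - 1 = \gamma_t(G) - 1$. First I would prove~(1). Since $\delta(G)\ge 2$ and $G$ is connected (Lemma~\ref{connected}), each component $H_i$ has at least $3$ vertices only if it is not a $P_2$; I want to rule out everything except $P_2$ and $P_3$. The key observation is a counting/minimality argument: $S$ is a minimum total dominating set of $G-v$, so no proper subset of $S$ totally dominates $G-v$. If some $H_i$ had a vertex $x$ all of whose neighbors in $H_i$ are "covered" by other vertices of $S_i$ — e.g.\ if $H_i$ contains a path on $4$ vertices or a vertex of degree $\ge 2$ inside $H_i$ together with enough structure — then deleting $x$ (or an endpoint of a suitable spanning subtree) still totally dominates, contradicting minimality. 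Concretely, I would choose a spanning tree $T_i$ of $H_i$; if $|S_i|\ge 4$, then $T_i$ has two leaves $x,y$ that are not both adjacent to the same vertex, and one checks $S - x$ still totally dominates $G - v$ (every vertex of $N(v)$ is dominated by $S$ independently of $x$ because deleting a single vertex of $S$ from the $G-v$ side cannot un-dominate $N(v)$ unless $x$ is the unique $S$-neighbor of some $w\in N(v)$, which is where the degree hypothesis on $w$ re-enters). So $|S_i|\le 3$, and since $\delta(G)\ge 2$ forces $H_i\ne K_1$ and $H_i\ne$ (a single vertex), and a $3$-vertex component with $\delta\ge 2$ internally would be a triangle $C_3$ — which I must also exclude, again by the minimality of $S$ since any two vertices of a $C_3$ totally dominate it. Hence $H_i\in\{P_2,P_3\}$.

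Next, part~(2). Suppose $H_1 = u_1u_2u_3$ is a $P_3$ component. The set $\{u_1,u_2\}$ already totally dominates $H_1$, so if there were any other component $H_j$ ($j\ge 2$), then $(S\setminus\{u_3\})$ would still totally dominate $G-v$ — contradiction — UNLESS $u_3$ is the unique $S$-neighbor of some $w\in N(v)$; but $w$ has $\delta(G)\ge 2$ neighbors, at least two, and if its only $S$-neighbor were $u_3$ then... here I would instead use Lemma~\ref{lem2}: I claim $N(u_2)\cap N(v)=\emptyset$, for otherwise pick $w\in N(u_2)\cap N(v)$; then $N(w)\setminus\{u_2\}$ compared with $N(v)$ gives a containment that Lemma~\ref{lem2} forbids, or more directly $S' = (S\setminus\{u_2,u_3\})\cup\{w\}$ has smaller size and still totally dominates $G-v$ after adjusting — I need to verify $u_1$ gets a neighbor, which it does via $w$ or via $u_2$... this is the delicate bookkeeping step. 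Granting $N(u_2)\cap N(v)=\emptyset$: then $u_2$'s only neighbors are $u_1,u_3$, so every $w\in N(v)$ has its $S$-neighbors among $u_1,u_3$; since $S$ must totally dominate all of $N(v)$ and both $u_1,u_3$ are needed (dropping either drops domination of $u_2$'s... no — I use that $G$ is connected and $N(v)\ne\emptyset$), the sets $N(u_1)-u_2$ and $N(u_3)-u_2$ are each nonempty, their union is $N(v)$, and they are disjoint because a common vertex $w\in N(u_1)\cap N(u_3)$ would let $\{u_1,u_3\}$ already dominate everything, allowing $S\setminus\{u_2\}$ — but then $u_1,u_3$ each still need a neighbor in $S\setminus\{u_2\}$: they have none among $S$, contradiction only if this forces something; alternatively $w\in N(u_1)\cap N(u_3)$ directly contradicts Lemma~\ref{lem2} applied to $u_2$ and $w$. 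Finally, that $G[S]=P_3$ (no isolated-vertex-free components besides it) follows from the "drop $u_3$" argument above once $N(u_2)\cap N(v)=\emptyset$ is established, since then $u_3$ can only be the private $S$-neighbor of vertices in $N(u_3)-u_2$, and for such a $w$ we have $w\in N(u_1)$ too is false, so $w$'s other neighbor lies in $N(v)$ or is $v$ — not in $S$ — meaning $w$ has $\le 1$ neighbor in $V(G-v)\setminus N(v)$; combined with $\delta\ge 2$ this pins down the structure and the presence of another component $H_j$ gives the contradiction via removing $u_3$.

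Part~(3) is then the parallel statement when every $H_i = u_iw_i$ is a $P_2$. For each $u\in S$, say $u = u_i$, I must show $N(u_i)\cap N(v)\ne\emptyset$: otherwise $u_i$'s only neighbor is $w_i$ (as $\delta\ge 2$ forces... no, $\delta\ge 2$ would be violated — wait, $u_i$ then has degree $1$, contradicting $\delta(G)\ge 2$ directly). So that half is immediate from $\delta(G)\ge 2$. For the disjoint-union claim: $N(v)$ is covered by $\bigcup_i(N(u_i)\cup N(w_i))$ minus the $P_2$-internal edges, i.e.\ by $\bigcup_i\big((N(u_i)-w_i)\cup(N(w_i)-u_i)\big)$, because every vertex of $N(v)$ must have a neighbor in $S$ (as $S$ totally dominates $G-v$). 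Disjointness of these $2t$ sets is the crux and the expected main obstacle: if $w\in N(x)\cap N(y)$ for two distinct vertices $x,y\in S$ not belonging to the same $P_2$, I would derive a contradiction with minimality of $S$ by exhibiting a smaller total dominating set of $G-v$ — replace the $P_2$ containing, say, $x$ by... — or, more cleanly, apply Lemma~\ref{lem2} in the spirit of part~(2): the vertex $w\in N(v)$ and a suitably chosen non-support vertex among $\{x,y\}$ or their partners will satisfy a neighborhood-containment killing $\gamma_t$-criticality. The routine verifications that the replacement sets still totally dominate $G-v$, and the case-checking of which vertex plays the role of $u$ and which of $v$ in Lemma~\ref{lem2}, I omit here; the one genuinely load-bearing point throughout is that dropping a single vertex from the $S$-side of $G-v$ fails to totally dominate only if that vertex is a \emph{private} $S$-neighbor of some vertex of $N(v)$, and the hypothesis $\delta(G)\ge 2$ (plus connectedness) tightly constrains when such private neighbors can exist.
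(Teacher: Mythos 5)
Your overall plan (classify the components of $G[S]$ by a minimality/exchange argument, then control adjacencies to $N(v)$) is the right shape, but the exchange mechanism you lean on throughout is the one that does not work, and it is exactly the point you flag as load-bearing. Repeatedly --- for $|S_i|\ge 4$, for excluding $C_3$, for ``drop $u_3$'' when a second component accompanies the $P_3$, and implicitly in the disjointness argument of (3) --- you delete one or two vertices from $S$ and claim the remainder still totally dominates $G-v$, contradicting $\gamma_t(G-v)=\gamma_t(G)-1$; you argue that $\delta(G)\ge 2$ prevents the deleted vertex from being the unique $S$-neighbour of some $w\in N(v)$. It does not: $w$'s second neighbour can be $v$ itself or another vertex of $N(v)$, and in fact part (3) of the very lemma you are proving asserts that in these graphs \emph{every} vertex of $N(v)$ has exactly one neighbour in $S$. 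So $S-x$ (or $S-\{x,y\}$) generally fails to dominate $N(v)$ and no contradiction is reached. The paper's proof sidesteps this by a different exchange: it removes two vertices of $S$ (or four, in one case of (3)) and adds $v$ \emph{together with} a suitably chosen $x\in N(v)$ --- e.g.\ $x\in N(u_1)\cap N(v)$ in the cycle case, $x\in N(w_1)\cap N(v)$ when a $P_2$ accompanies the $P_3$, and $y_i\in N(v)\cap N(w_i)$, $y_j\in N(v)\cap N(w_j)$ in the four-for-four swap $(S-\{u_i,u_j,w_i,w_j\})\cup\{v,x,y_i,y_j\}$ --- producing a total dominating set of $G$ itself of size $\gamma_t(G)-1$. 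Because $v$ belongs to the new set, all of $N[v]$ is dominated for free and the private-neighbour problem never arises.

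Beyond the mechanism, the steps you label ``delicate bookkeeping'' and ``I omit here'' are precisely where the content of (2) and (3) lies, and your sketches of them do not go through as written. The key claim $N(u_2)\cap N(v)=\emptyset$ has a one-line proof in the paper ($\{v,x,u_2\}$ with $x\in N(u_2)\cap N(v)$ would be a total dominating set of size $3<4=\gamma_t(G)$), whereas your set $(S\setminus\{u_2,u_3\})\cup\{w\}$ need not dominate $u_1$ or the rest of $N(v)$, and your appeal to Lemma~\ref{lem2} at that stage fails because $N(w)\not\subseteq N(v)$ (and no useful containment is yet available); your later use of Lemma~\ref{lem2} with the pair $u_2,w$ for the disjointness of $N(u_1)-u_2$ and $N(u_3)-u_2$ is fine, but only \emph{after} $N(u_2)\cap N(v)=\emptyset$ is established. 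Likewise, ruling out a companion component to the $P_3$, and the cross-component case $u_i,u_j\in N(x)$ in (3), are never actually proved in your write-up; the paper handles them by the explicit swaps recorded above. In short, the statement-level strategy is parallel to the paper's, but the missing idea --- exchange into a total dominating set of $G$ containing $v$ and a chosen neighbour, rather than pruning $S$ inside $G-v$ --- is what makes every one of these verifications work.
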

\begin{proof} (1) First, we aim to
show that $\Delta(H_i) \leq 2$ for $i=1,2,\cdots,t$. Suppose that
$\Delta(H_j) \geq 3$ for some $1\le j\le t$. Let $u$ be a
vertex of $H_j$ whose degree in $H_j$ is at least $3$. Choose a
spanning tree $T$ of $H_j$ containing all edges incident to $u$.
Then, $T$ has at least three leaves. Let $u_{1},u_{2},u_{3}$ be
three leaves in $T$. For any $x \in N(v)$, let
$S'=(S-\{u_{2},u_{3}\}) \cup \{v,x\}$. Then, $S'$ is a total
dominating set of $G$ and hence $\gamma_{t}(G) \leq
|S^{\prime}|=|S|=\gamma_{t}(G)-1,$ which is a contradiction.
Therefore, $\Delta(H_i) \leq 2$ for $i=1,2,\cdots,t$. It implies
that $H_i$ is a path or a cycle for $i=1,2,\cdots, t$.

Suppose that there exists $j$ such that $H_j$ is a cycle $u_{1}u_{2}\cdots
u_{k}u_1$ for $k \geq 3$. Then, there is $u_\ell$ such that
$N(u_\ell) \cap N(v) \neq \emptyset$. Without loss of generality,
we assume $N(u_1) \cap N(v) \neq \emptyset$ and pick a vertex $x \in
N(u_1) \cap N(v)$. Then, $S''=(S-\{u_{2},u_{3}\}) \cup \{v,x\}$ is
a total dominating set of $G,$ which is a contradiction. So, for
all $i = 1,2, \ldots, t$, $H_i$ is a path.

Suppose that there exists a path $H_i =u_{1}u_{2}\cdots u_{k}$ for
$k \geq 4$. Then, $(S-\{u_{1},u_{k}\}) \cup \{v,x\}$ for some $x
\in N(v)$ is a total dominating set of $G,$ which is a
contradiction. Therefore, $H_i$ is a $P_{2}$ or a $P_{3}$ for all
$i=1,2, \cdots,t$.

 \noindent (2) Let $G[S]$ contains a $P_{3}$
component, say $u_{1}u_{2}u_{3}$. If $G[S]$ contains another
component $w_{1}w_{2}w_{3}$ which is isomorphic to $P_{3}$, then for some $x \in N(v)$,
$(S-\{u_{3},w_{3}\}) \cup \{v,x\}$ is a total dominating set of
$G$, which is a contradiction. Next if we suppose $G[S]$ contains a $P_{3}$ and at least one $P_{2}$, say
$w_{1}w_{2}$. Then, $N(v) \cap N(w_1) \neq
\emptyset$ because $\delta(G) \ge 2$. For some $x \in N(v) \cap
N(w_1)$, $(S-\{u_{3},w_{2}\}) \cup \{v,x \}$ is a total dominating
set of $G$, it leads us a contradiction. Therefore, $G[S]=P_{3}=u_{1}u_{2}u_{3}$.

Since $\delta(G) \ge 2$, $(N(u_i)-u_2) \cap N(v) \neq \emptyset$
for any $i=1$ or $3$. If $N(u_2) \cap N(v) \neq \emptyset$ then
for any $x \in N(u_2) \cap N(v)$, $\{v , x , u_2 \}$ is a total
dominating set of $G$, which is a contradiction. Hence, $N(v)$ is
a disjoint union of nonempty sets $N(u_1)-u_2$ and
 $N(u_3)-u_2$.

\begin{figure}
\begin{pspicture}[shift=-1.2](-1.6,-2.2)(1.6,3.2)
\psline[linecolor=emgreen, linewidth=2pt](0,3)(-1,2)
\psline[linecolor=emgreen, linewidth=2pt](0,3)(1,2)
\psline[linecolor=emgreen, linewidth=2pt](-1,0)(-1,-1)
\psline(-1,-1)(0,-1.5)(1,-1)
\psline[linecolor=emgreen, linewidth=1.5pt](1,-1)(1,0)
\rput[t](-1,.8){$.$}
\rput[t](-1,.7){$.$}
\rput[t](-1,.6){$.$}
\rput[t](1,.8){$.$}
\rput[t](1,.7){$.$}
\rput[t](1,.6){$.$}
\rput[t](0,-2){$a)$}
\psframe[linecolor=darkred,linewidth=1pt](-1.5,0)(-.5,2)
\psframe[linecolor=darkred,linewidth=1pt](.5,0)(1.5,2)
\pscircle[fillstyle=solid,fillcolor=darkgray,linecolor=black](-1,1.7){.1}
\pscircle[fillstyle=solid,fillcolor=darkgray,linecolor=black](-1,1.4){.1}
\pscircle[fillstyle=solid,fillcolor=darkgray,linecolor=black](-1,1.1){.1}
\pscircle[fillstyle=solid,fillcolor=darkgray,linecolor=black](-1,.3){.1}
\pscircle[fillstyle=solid,fillcolor=darkgray,linecolor=black](1,1.7){.1}
\pscircle[fillstyle=solid,fillcolor=darkgray,linecolor=black](1,1.4){.1}
\pscircle[fillstyle=solid,fillcolor=darkgray,linecolor=black](1,1.1){.1}
\pscircle[fillstyle=solid,fillcolor=darkgray,linecolor=black](1,.3){.1}
\pscircle[fillstyle=solid,fillcolor=darkgray,linecolor=black](0,3){.1}
\pscircle[fillstyle=solid,fillcolor=darkgray,linecolor=black](0,-1.5){.1}
\pscircle[fillstyle=solid,fillcolor=darkgray,linecolor=black](-1,-1){.1}
\pscircle[fillstyle=solid,fillcolor=darkgray,linecolor=black](1,-1){.1}
\end{pspicture}
\begin{pspicture}[shift=-1.7](-6,-2.2)(6.5,4.2)
\psline[linecolor=emgreen, linewidth=2pt](0,4)(-1,2)
\psline[linecolor=emgreen, linewidth=2pt](0,4)(-3,2)
\psline[linecolor=emgreen, linewidth=2pt](0,4)(-5,2)
\psline[linecolor=emgreen, linewidth=2pt](0,4)(1,2)
\psline[linecolor=emgreen, linewidth=2pt](0,4)(3.5,2)
\psline[linecolor=emgreen, linewidth=2pt](0,4)(5.5,2)
\psline[linecolor=emgreen, linewidth=2pt](-5,0)(-5,-1) \psline(-5,-1)(-3,-1)
\psline[linecolor=emgreen, linewidth=2pt](-3,-1)(-3,0)
\psline[linecolor=emgreen, linewidth=2pt](-1,0)(-1,-1) \psline(-1,-1)(1,-1)
\psline[linecolor=emgreen, linewidth=2pt](1,-1)(1,0)
\psline[linecolor=emgreen, linewidth=2pt](3.5,0)(3.5,-1) \psline(3.5,-1)(5.5,-1)
\psline[linecolor=emgreen, linewidth=2pt](5.5,-1)(5.5,0)
\rput[t](2.2,1){$\ldots$}
\rput[t](-5,.8){$.$}
\rput[t](-5,.7){$.$}
\rput[t](-5,.6){$.$}
\rput[t](-3,.8){$.$}
\rput[t](-3,.7){$.$}
\rput[t](-3,.6){$.$}
\rput[t](-1,.8){$.$}
\rput[t](-1,.7){$.$}
\rput[t](-1,.6){$.$}
\rput[t](1,.8){$.$}
\rput[t](1,.7){$.$}
\rput[t](1,.6){$.$}
\rput[t](3.5,.8){$.$}
\rput[t](3.5,.7){$.$}
\rput[t](3.5,.6){$.$}
\rput[t](5.5,.8){$.$}
\rput[t](5.5,.7){$.$}
\rput[t](5.5,.6){$.$}
\rput[t](0,-1.5){$b)$}
\psframe[linecolor=darkred,linewidth=1pt](-5.5,0)(-4.5,2)
\psframe[linecolor=darkred,linewidth=1pt](-3.5,0)(-2.5,2)
\psframe[linecolor=darkred,linewidth=1pt](-1.5,0)(-.5,2)
\psframe[linecolor=darkred,linewidth=1pt](.5,0)(1.5,2)
\psframe[linecolor=darkred,linewidth=1pt](3,0)(4,2)
\psframe[linecolor=darkred,linewidth=1pt](5,0)(6,2)
\pscircle[fillstyle=solid,fillcolor=darkgray,linecolor=black](-5,1.7){.1}
\pscircle[fillstyle=solid,fillcolor=darkgray,linecolor=black](-5,1.4){.1}
\pscircle[fillstyle=solid,fillcolor=darkgray,linecolor=black](-5,1.1){.1}
\pscircle[fillstyle=solid,fillcolor=darkgray,linecolor=black](-5,.3){.1}
\pscircle[fillstyle=solid,fillcolor=darkgray,linecolor=black](-3,1.7){.1}
\pscircle[fillstyle=solid,fillcolor=darkgray,linecolor=black](-3,1.4){.1}
\pscircle[fillstyle=solid,fillcolor=darkgray,linecolor=black](-3,1.1){.1}
\pscircle[fillstyle=solid,fillcolor=darkgray,linecolor=black](-3,.3){.1}
\pscircle[fillstyle=solid,fillcolor=darkgray,linecolor=black](-1,1.7){.1}
\pscircle[fillstyle=solid,fillcolor=darkgray,linecolor=black](-1,1.4){.1}
\pscircle[fillstyle=solid,fillcolor=darkgray,linecolor=black](-1,1.1){.1}
\pscircle[fillstyle=solid,fillcolor=darkgray,linecolor=black](-1,.3){.1}
\pscircle[fillstyle=solid,fillcolor=darkgray,linecolor=black](1,1.7){.1}
\pscircle[fillstyle=solid,fillcolor=darkgray,linecolor=black](1,1.4){.1}
\pscircle[fillstyle=solid,fillcolor=darkgray,linecolor=black](1,1.1){.1}
\pscircle[fillstyle=solid,fillcolor=darkgray,linecolor=black](1,.3){.1}
\pscircle[fillstyle=solid,fillcolor=darkgray,linecolor=black](3.5,1.7){.1}
\pscircle[fillstyle=solid,fillcolor=darkgray,linecolor=black](3.5,1.4){.1}
\pscircle[fillstyle=solid,fillcolor=darkgray,linecolor=black](3.5,1.1){.1}
\pscircle[fillstyle=solid,fillcolor=darkgray,linecolor=black](3.5,.3){.1}
\pscircle[fillstyle=solid,fillcolor=darkgray,linecolor=black](5.5,1.7){.1}
\pscircle[fillstyle=solid,fillcolor=darkgray,linecolor=black](5.5,1.4){.1}
\pscircle[fillstyle=solid,fillcolor=darkgray,linecolor=black](5.5,1.1){.1}
\pscircle[fillstyle=solid,fillcolor=darkgray,linecolor=black](5.5,.3){.1}
\pscircle[fillstyle=solid,fillcolor=darkgray,linecolor=black](0,4){.1}
\pscircle[fillstyle=solid,fillcolor=darkgray,linecolor=black](-5,-1){.1}
\pscircle[fillstyle=solid,fillcolor=darkgray,linecolor=black](-3,-1){.1}
\pscircle[fillstyle=solid,fillcolor=darkgray,linecolor=black](-1,-1){.1}
\pscircle[fillstyle=solid,fillcolor=darkgray,linecolor=black](1,-1){.1}
\pscircle[fillstyle=solid,fillcolor=darkgray,linecolor=black](3.5,-1){.1}
\pscircle[fillstyle=solid,fillcolor=darkgray,linecolor=black](5.5,-1){.1}
\end{pspicture}
\caption{Figures of $\gamma_{t}$-critical graph $G$ with $\gamma_{t}(G)=n-\Delta(G)$ and $\delta(G) \ge 2$
 where all vertices in the boxes are adjacent
to vertices connected to boxes by thick lines and there could
be edges between vertices in different boxes or in the same box. This convention will be used for other figures.}
\label{fig0}
\end{figure}
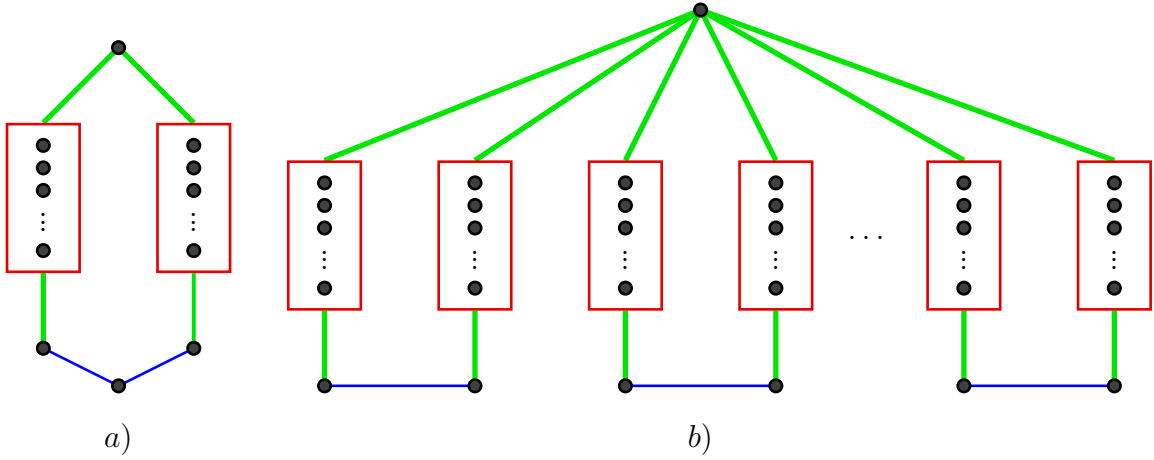

\noindent (3) Since $\delta(G) \ge 2$, $N(u) \cap N(v) \neq
\emptyset$ for any $u \in S$. Furthermore, for any $x \in N(v)$,
$N(x) \cap S \neq \emptyset$ because $S$ is a total dominating set
of $G-v$. We want to show that $|N(x) \cap S | = 1$ for any $ x
\in N(v)$. Suppose that there exists an $x \in N(v)$ such that
$u_i, w_i \in N(x)$ for some $i =1,2, \ldots,t $. Then, $S' =
(S-\{u_i,w_i \}) \cup \{v,x \}$ is a total dominating set of $G$,
which is a contradiction.

For the next case, suppose that there exists an $x \in N(v)$ such
that $u_i, u_j \in N(x)$ for some different $i,j$. Choose $y_i \in
N(v) \cap N(w_i)$ and $y_j \in  N(v) \cap N(w_j)$. Then, one can
easily check that $(S-\{u_i,u_j,w_i,w_j \}) \cup \{v,x,y_i,y_j \}$
is a total dominating set of $G$, which is a contradiction.
Similarly, one can show that a contradiction occurs if $|N(x) \cap
S| \ge 2$ for some $x \in N(v)$. It implies that $N(v)$ is a
disjoint union of $N(u_1)-w_1,N(w_1)-u_1, \ldots
,N(u_t)-w_t,N(w_t)-u_t $.
\end{proof}

These results can be summarized to obtain general figures of $\gamma_{t}$-critical
graph $G$ with $\gamma_{t}(G)=n-\Delta(G)$ and $\delta(G) \ge 2$ as
in Figure~\ref{fig0}.

\section{Proof of Theorem 2} \label{main}

%\begin{thm} [\cite{CS} ] (1) There is no $3$-$\gamma_{t}$-critical graph
%$G$ of order $\Delta(G)+3$ with $\Delta(G)\leq 7$, $\delta(G)\geq
%2$ and $\Delta(G)$ odd. \\
%(2) There is a family of $3$-$\gamma_{t}$-critical graphs of order $\Delta(G)+3$ with
%$\Delta(G)\geq 9$, $\delta(G)\geq 2$ and $\Delta(G)$ odd.
%\end{thm}

In this section, we shall give a proof of Theorem~\ref{mainthm0}.
Suppose that $G$ is an $m$-$\gamma_t$-critical graph of order
$\Delta (G)+m$. Let $v$ be a vertex for which $d(v)=\Delta (G)$.
By Lemma~\ref{properties}, each connected component of
$G[V(G)-N[v]]$ is a $P_2$ or a $P_3$ and if there exists a component
$P_3$ then $G[V(G)-N[v]] = P_3$. Hence, $m-1 = |G[V(G)-N[v]]|$ is
3 or even. It implies that  $m=4$ or $m \ge 3$ is odd.

If $m=4$, then $G[V(G)-N[v]]$ is a $P_3$ and $\Delta(G) \ge 2$ by
Lemma~\ref{properties} (2). If $m \ge 3$ is odd then each component
of $G[V(G)-N[v]]$ is a $P_2$ and $\Delta(G) \ge m-1$ by
Lemma~\ref{properties} (3). Hence, for $m=4$ or odd $m \ge 3$ if
$\Delta < 2\lfloor\frac{m-1}{2}\rfloor$, then there exists no
$m$-$\gamma_t$-critical graph of order $\Delta + m$.

For $m=4$ and for even $\Delta \ge 2$, let $G$ be a graph whose vertex set
is $\{v\} \cup (U \cup W)\cup \{u_1,u_2,u_3 \}$ with $|U|=|W|=\Delta/2$
and whose edge set is composed of $\{vx, vy, u_1x, u_3y  | x \in U ,\  y \in W \} \cup\{u_1u_2, u_2u_3 \}$ as in Figure~\ref{fig0} a) and the subgraph induced by the vertices in between $U$ and $W$
 is $K_{\Delta/2, \Delta/2}-E(M)$, where $M$ is an 1-factor of $K_{\Delta/2, \Delta/2}$.
 Then, one can show that $G$ is a 4-$\gamma_t$-critical graph of order $\Delta (G)+4$.

For odd $m \ge 3$ and for even $\Delta \ge m-1$, let $G_1$ be a
graph  whose vertex set is $\{v_1\} \cup (U_1 \cup W_1) \cup
\{u_1,w_1 \}$ with $|U_1|=|W_1|=(\Delta-m+3)/2$ and whose edge set
is composed of $\{v_1x, v_1y, u_1x, w_1y  | x \in U_1 ,\  y \in
W_1 \} \cup\{u_1w_1 \}$ and the subgraph induced by the vertices in between $U_1$ and $W_1$
 is $K_{(\Delta-m+3)/2,
(\Delta-m+3)/2}-E(M)$, where $M$ is an 1-factor of $K_{(\Delta-m+3)/2,
(\Delta-m+3)/2}$. Then, one can show that $G_1$
is a $3$-$\gamma_t$-critical graph of order $\Delta (G_1)+3=
\Delta-m+6$. Note that $C_5$ is a $3$-$\gamma_t$-critical graph
of order $5$. So, the vertex amalgamation $G$ of $G_1$ and
$(m-3)/2$  5-cycles  with $v_1$ and any vertex in each $(m-3)/2$
5-cycles as in Figure~\ref{fig1} is an $m$-$\gamma_t$-critical graph of order $\Delta-m+6
+ 4\cdot \frac{m-3}{2}=\Delta+m$ by Proposition~\ref{vertex-amal}.
Hence, for $m=4$ or odd $m \ge 3$ and for any even $\Delta \ge
2\lfloor \frac{m-1}{2}\rfloor$, there exists an
$m$-$\gamma_t$-critical graph of order $\Delta (G)+m$.

\begin{figure}
\begin{pspicture}[shift=-.7](-4.5,-2.2)(4.5,1.2)
\psline(0,1)(-.5,-.5)(-.5,-1.5)(.5,-1.5)(.5,-.5)(0,1)
\psline(0,1)(3.5,-.5)(3.5,-1.5)(4.5,-1.5)(4.5,-.5)(0,1)
\psline[linecolor=emgreen, linewidth=2pt](0,1)(-2,0)
\psline[linecolor=emgreen, linewidth=2pt](0,1)(-4,0)
\psline[linecolor=emgreen, linewidth=2pt](-2,-1.5)(-2,-2)
\psline[linecolor=emgreen, linewidth=2pt](-4,-1.5)(-4,-2)
\psline(-4,-2)(-2,-2)
\rput[t](2,-1){$\ldots$}
\rput[t](-4,-.8){$.$}
\rput[t](-4,-.9){$.$}
\rput[t](-4,-1){$.$}
\rput[t](-2,-.8){$.$}
\rput[t](-2,-.9){$.$}
\rput[t](-2,-1){$.$}
\psframe[linecolor=darkred,linewidth=1pt](-4.5,-1.5)(-3.5,0)
\psframe[linecolor=darkred,linewidth=1pt](-2.5,-1.5)(-1.5,0)
\pscircle[fillstyle=solid,fillcolor=darkgray,linecolor=black](-4,-.2){.06}
\pscircle[fillstyle=solid,fillcolor=darkgray,linecolor=black](-4,-.4){.06}
\pscircle[fillstyle=solid,fillcolor=darkgray,linecolor=black](-4,-.6){.06}
\pscircle[fillstyle=solid,fillcolor=darkgray,linecolor=black](-4,-1.3){.06}
\pscircle[fillstyle=solid,fillcolor=darkgray,linecolor=black](-2,-.2){.06}
\pscircle[fillstyle=solid,fillcolor=darkgray,linecolor=black](-2,-.4){.06}
\pscircle[fillstyle=solid,fillcolor=darkgray,linecolor=black](-2,-.6){.06}
\pscircle[fillstyle=solid,fillcolor=darkgray,linecolor=black](-2,-1.3){.06}
\pscircle[fillstyle=solid,fillcolor=darkgray,linecolor=black](-4,-2){.1}
\pscircle[fillstyle=solid,fillcolor=darkgray,linecolor=black](-2,-2){.1}
\pscircle[fillstyle=solid,fillcolor=darkgray,linecolor=black](0,1){.1}
\pscircle[fillstyle=solid,fillcolor=darkgray,linecolor=black](-.5,-.5){.1}
\pscircle[fillstyle=solid,fillcolor=darkgray,linecolor=black](.5,-.5){.1}
\pscircle[fillstyle=solid,fillcolor=darkgray,linecolor=black](-.5,-1.5){.1}
\pscircle[fillstyle=solid,fillcolor=darkgray,linecolor=black](.5,-1.5){.1}
\pscircle[fillstyle=solid,fillcolor=darkgray,linecolor=black](3.5,-.5){.1}
\pscircle[fillstyle=solid,fillcolor=darkgray,linecolor=black](4.5,-.5){.1}
\pscircle[fillstyle=solid,fillcolor=darkgray,linecolor=black](3.5,-1.5){.1}
\pscircle[fillstyle=solid,fillcolor=darkgray,linecolor=black](4.5,-1.5){.1}
\end{pspicture}
\caption{Figures of $m$-$\gamma_t$-critical graph of order $\Delta+m$, where each box contains $\frac{\Delta-m+3}{2}$ vertices and the subgraph induced by the vertices in two boxes
 is $K_{(\Delta-m+3)/2,
(\Delta-m+3)/2}-E(M)$, where $M$ is a 1-factor of $K_{(\Delta-m+3)/2,
(\Delta-m+3)/2}$.}
\label{fig1}
\end{figure}
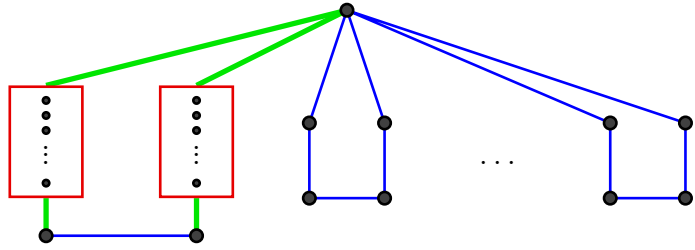

Now, we want to consider odd $\Delta$. In the paper~\cite{MR}, Mojdeh and Rad showed that there is no
$3$-$\gamma_{t}$-critical graph $G$ of order $ \Delta(G)+3$ for
$\Delta(G)= 3, 5$. In~\cite{CS}, Chen and Sohn proved
that there is no $3$-$\gamma_{t}$-critical graph of order
$\Delta(G)+3$ with $\Delta(G)=7$. Furthermore, they gave a family
of $3$-$\gamma_{t}$-critical graphs of order $\Delta(G)+3$ with
$\Delta(G)\geq 9$.  For any odd $m \ge 3$ and for any odd $\Delta \ge m+6$, let $G_2$ be a
$3$-$\gamma_{t}$-critical graph of order
$12$ with $\Delta(G_2)=9$ and $\delta(G_2) \ge 2$ and let $G_3$ be an
$(m-2)$-$\gamma_{t}$-critical graph of order
$\Delta +m-11$ with $\Delta(G_3)=\Delta-9 \ge m-3$ and $\delta(G_3) \ge 2$. Let $v_i \in V(G_i)$ be a vertex such that
$d(v_i)=\Delta(G_i)$ for each $i=2,3$. Then, the vertex amalgamation $G$ of $G_2$ and  $G_3$ with the vertices $v_2$ and $v_3$ is an $m$-$\gamma_{t}$-critical graph of order
$\Delta +m$ with $\Delta(G)=\Delta$ and $\delta(G_3) \ge 2$ by Proposition~\ref{vertex-amal}.
In the next section, we construct a $4$-$\gamma_t$-critical graph of
order $\Delta +4$  for any odd $\Delta \ge 9$. Hence, for any
$m=4$ or odd $m \ge 3$ and for any odd $\Delta \ge 2\lfloor
\frac{m-1}{2}\rfloor +7$, there exists an $m$-$\gamma_t$-critical
graph of order $\Delta + m$.

\section{$m=4$ or odd $m \ge 9$} \label{m4}

The only remaining open cases are $\Delta =
2\lceil\frac{m-1}{3}\rceil +k$, $k=1, 3, 5$. In this section, we
prove that there is no $4$-$\gamma_{t}$-critical graph of order
$\Delta +4$ with $\delta(G)\geq 2$ for $\Delta = 3,5$ or $7$. For
odd $m \ge 9$, it will  be shown that there exists an
$m$-$\gamma_{t}$-critical graph of order $\Delta +m$ with
$\delta(G)\geq 2$ for any odd $\Delta \ge
2\lceil\frac{m-1}{3}\rceil +1$.

\begin{thm}\label{mainthm3} There is no $4$-$\gamma_{t}$-critical graph
$G$ of order $\Delta(G)+4$ with $\Delta(G)= 3, 5, 7$ and
$\delta(G) \geq 2$.
\end{thm}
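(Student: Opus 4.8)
The plan is to argue by contradiction using the structural description of $\gamma_t$-critical graphs with $\gamma_t(G)=n-\Delta(G)$ developed in Section~\ref{intro}. Suppose $G$ is a $4$-$\gamma_t$-critical graph of order $\Delta(G)+4$ with $\delta(G)\ge 2$, and let $v$ be a vertex of maximum degree. By Lemma~\ref{properties}(2), since $m=4$, the graph $G[S]$ with $S=V(G)-N[v]$ is a single path $P_3 = u_1u_2u_3$, and moreover $N(u_2)\cap N(v)=\emptyset$ while $N(v)$ decomposes as a disjoint union of the nonempty sets $U:=N(u_1)-u_2$ and $W:=N(u_3)-u_2$. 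So the vertex set of $G$ is $\{v,u_1,u_2,u_3\}\cup U\cup W$, with $|U|+|W|=\Delta(G)$, and $u_2$ is adjacent only to $u_1$ and $u_3$. Every vertex of $U\cup W$ is adjacent to $v$, and each such vertex has degree $\le\Delta(G)=|U|+|W|$, which already forces $|U|,|W|\ge$ (roughly) something constraining when $\Delta(G)$ is small; I would first record these degree bounds and the constraint $\delta(G)\ge 2$ to see that $U$ and $W$ cannot be too unbalanced.

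The heart of the argument is to exploit criticality at the vertices of $U\cup W$ (none of which is a support vertex, since every vertex has degree $\ge 2$ and is adjacent to $v$ which has degree $\Delta\ge 3$). By Lemma~\ref{no-neighbor}, for each $x\in U\cup W$ we have $\gamma_t(G-x)=3$, and a $\gamma_t(G-x)$-set $T_x$ contains no neighbour of $x$; in particular $v\notin T_x$, so $T_x\subseteq (U\cup W\cup\{u_1,u_2,u_3\})-N[x]$. I would then analyse which $3$-element sets can totally dominate $G-x$: such a set must dominate $v$ (so it meets $U\cup W$), must dominate $u_2$ (so it contains $u_1$ or $u_3$), and must dominate every remaining vertex of $U\cup W$. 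Because $|U\cup W\setminus\{x\}|=\Delta(G)-1$ is large relative to $3$, a single vertex $y\in U\cup W$ in $T_x$ can only dominate vertices adjacent to $y$, so the subgraph induced on $U\cup W$ must have very restricted structure (near-dominating behaviour from one or two vertices). Similarly, applying criticality at $u_1$ and at $u_3$ (which are also not support vertices when $\Delta\ge 3$) gives $\gamma_t(G-u_1)=\gamma_t(G-u_3)=3$ with the analogous restrictions. Combining these for the small values $\Delta(G)=3,5,7$ — so $|U|+|W|\in\{3,5,7\}$ — reduces the problem to a finite case analysis on the adjacency pattern within $U\cup W$ and between $U,W,u_1,u_3$.

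For each of the three values $\Delta(G)=3,5,7$ I would enumerate the essentially finitely many configurations consistent with: $\delta(G)\ge 2$; $\Delta(G)$ is the maximum degree; $N(v)=U\sqcup W$ with $u_1$ adjacent to all of $U$ and $u_3$ adjacent to all of $W$; and the existence of the required small total dominating sets of $G-x$, $G-u_1$, $G-u_3$. In each case one derives either a vertex of degree exceeding $\Delta(G)$, or a pair of nonadjacent vertices $a,b\notin S(G)$ with $N(a)\subseteq N(b)$ contradicting Lemma~\ref{lem2}, or a total dominating set of $G$ of size $\le 3$ contradicting $\gamma_t(G)=4$. The main obstacle is the bookkeeping in the $\Delta(G)=7$ case, where $|U\cup W|=7$ allows more adjacency patterns; here I expect the cleanest route is to first show $|U|$ and $|W|$ differ by at most one (else a low-degree vertex on the smaller side together with $v$ violates $\delta$ or yields a size-$3$ total dominating set), pinning $(|U|,|W|)\in\{(3,4),(4,3)\}$, and then to use the $\gamma_t(G-u_1)$-set and $\gamma_t(G-u_3)$-set — each of which must dominate the whole of $U\cup W$ minus at most one vertex using essentially two vertices — to force so many edges inside $U\cup W$ that some vertex acquires degree $8>\Delta(G)$, the final contradiction. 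This matches the known phenomenon that the obstruction disappears once $\Delta\ge 9$, where the analogous counting no longer forces an over-large degree and an explicit construction (given in the next section) exists.
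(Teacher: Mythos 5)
Your setup coincides with the paper's: take $v$ of maximum degree, use the structure of $V(G)-N[v]$ as a $P_3$ (the paper's $u,z,w$, your $u_1u_2u_3$) with $N(v)$ split as the disjoint union $U\sqcup W$ of the private neighbourhoods of the two endpoints, and then analyse the $3$-element sets $S_x=\gamma_t(G-x)$-sets, which by Lemma~\ref{no-neighbor} avoid $N(x)$. Up to that point you are on the paper's track. The problem is that everything after the setup is deferred: the statement ``for each of the three values $\Delta(G)=3,5,7$ I would enumerate the essentially finitely many configurations \dots in each case one derives a contradiction'' is precisely the content of the theorem, and it is exactly the part the paper has to work for. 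For $\Delta=3$ and the unbalanced $\Delta=5$ split the contradiction comes from identifying a $C_6$ after a deletion; for the split $(2,5)$ at $\Delta=7$ the paper needs a genuine Pigeonhole argument on which $x_i$ lies in $S_{y_1},\dots,S_{y_5}$ and which $y_k$ accompanies it; for the split $(3,4)$ it needs a long edge-by-edge analysis, broken further into subcases by the number of edges inside $\{x_1,x_2,x_3\}$, ending in a $3$-element total dominating set of $G$. None of this is routine bookkeeping that follows from the constraints you list, so as written the proposal is a plan rather than a proof.

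Moreover, the two concrete shortcuts you do commit to for $\Delta=7$ are unsubstantiated and, as far as I can see, incorrect in their stated form. First, the unbalanced splits $(1,6)$ and $(2,5)$ are not eliminated by ``a low-degree vertex on the smaller side together with $v$ violates $\delta$ or yields a size-$3$ total dominating set'': with $|U|=1$ the vertex $u_1$ has degree exactly $2$, which is allowed, and sets such as $\{v,u_3,y\}$ or $\{u_2,u_3,y\}$ totally dominate the relevant deleted graphs, so no immediate contradiction of this kind appears; the $(2,5)$ case in particular requires the Pigeonhole argument on the sets $S_{y_j}$, not a degree count. Second, in the balanced case $(3,4)$ the forced conclusion is not ``some vertex acquires degree $8>\Delta$'': the adjacency constraints one derives there never push any vertex past degree $7$, and the actual contradiction is the existence of a total dominating set of size $3$ in $G$ itself (or an earlier inconsistency among the forced sets $S_{x_i}$, $S_{y_j}$). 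So the mechanism you predict for closing the hardest case is not the one that works, and you have not supplied an alternative derivation. To turn this into a proof you would need to carry out the case analysis explicitly, split by $(|U|,|W|)\in\{(1,2)\}$, $\{(1,4),(2,3)\}$, $\{(1,6),(2,5),(3,4)\}$, and in each case exhibit the forced edges and the final contradiction, as the paper does.
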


\begin{proof} Let $G$ be a $\gamma_{t}$-critical graph with $\gamma_{t}= n- \Delta(G)$ and $\delta(G)\geq 2$.
For any vertex $u\in V(G)$, let $S_{u}$ be a $\gamma_{t}(G-u)$-set. Choose $v\in V(G)$ such that $d(v)=\Delta(G)$.
Since $n(G) =\Delta(G)+4$, we can assume that $V(G)-N[v]=\{u,z,w\}$.
Since $G$ is $4$-$\gamma_{t}$-critical, by Lemma~\ref{lem1}, it follows
that $S_{v}=\{u,z,w\}$ and $N(u)\cup N(w)-\{z\}=N(v).$
Furthermore, $N(u)\cap N(w)=\{z\}$. Otherwise, say $x\in
N(u)\cap N(w)$, then $\{v,x,u\}$ is a $\gamma_{t}(G)$-set, which is
a contradiction.

Suppose that  $|N(u)\cap N(v)| \ge 2$ and $|N(w)\cap N(v)| \ge 2$. Then, for any $x \in N(u)\cap N(v)$, $S_x =\{ z,w,y \}$ or $\{ w, y, x_1 \}$ for some $y\in N(w)\cap N(v)$ and $x_1 \in N(u)\cap N(v)$. If $S_x =\{ z,w,y \}$ then $y$ dominates all elements in $ N(u)\cap N(v) - \{ x \}$ and hence, $\{ w, y, x_2 \}$ is also a total dominating set of $G-x$ for any $x_2 \in N(u)\cap N(v) - \{ x \}$. Therefore, we assume that for any $t \in N(v)$, $|S_t \cap N(v)| \ge 2$ in the case $|N(u)\cap N(v)| \ge 2$ and $|N(w)\cap N(v)| \ge 2$.

It divides into three cases depending on $\Delta(G)$.

\noindent\emph{Case 1}. $\Delta(G)=3$.  We assume that $N(u)\cap N(v) = \{x_1\}$ and $N(w)\cap N(v) = \{y_1, y_2\}$.
Since $G-y_2$ is  the cycle $C_{6}$ which has a total domination number $4$. It is a contradiction.

\noindent\emph{Case 2}. $\Delta(G)=5$. It divides into two cases depending on $|N(u)\cap N(v)|$.

\noindent\emph{Case 2.1}. We assume that
$N(u)\cap N(v) = \{x_1\}$ and $N(w)\cap N(v) = \{y_1, y_2, y_3,y_4\}$. It is obvious that there is no edges $x_1y_j$ ($j = 1,2,3,4$) in $G$.
If we delete $y_1$, there is the cycle $C_{6}$ in $G$ which have a total domination number $4$. It is a contradiction.

\noindent\emph{Case 2.2}. We assume that $N(u)\cap N(v) = \{x_1, x_2\}$ and $N(w)\cap N(v) = \{y_1, y_2, y_3\}$.
It is obvious that for any $i=1,2,3$, $y_i$ cannot be adjacent to both $x_1$ and $x_2$.
Without loss of generality, we can assume that  $x_1y_1, x_1y_2 \notin E(G)$.
It implies that  $S_{x_{2}} = \{ x_1, y_3, w \},$  $x_1y_3 \in E(G)$ and $x_2y_3 \notin E(G)$. By considering $S_{y_3}$, one can show that $x_2y_1\in E(G)$ or $x_2y_2 \in E(G)$. Let $x_2y_1 \in E(G)$. Then, $S_{y_1}=\{ x_1, y_3, u \}$ and $y_2y_3 \in E(G)$. Furthermore, $S_{y_2} = \{ x_2, y_1, u \}$ and $y_1y_3 \in E(G)$. In this case, $\{ x, y_3, u \}$ is a total dominating set of $G$, a contradiction.

\noindent\emph{Case 3}. $\Delta(G)=7$.  It divides into three cases depending on $|N(u)\cap N(v)|$.

\noindent\emph{Case 3.1}. We assume that $N(u)\cap N(v) = \{x_1\}$ and $N(w)\cap N(v) = \{y_1, y_2, y_3,y_4, y_5, y_6\}$.
It is obvious that $G$ is not $4$-$\gamma_{t}$-critical graph.

\noindent\emph{Case 3.2}. We assume that $N(u)\cap N(v) = \{x_1, x_2\}$ and $N(w)\cap N(v) = \{y_1, y_2, y_3, y_4, y_5\}$.
By the Pigeonhole Principle, we can assume that $x_{1}\in S_{y_{1}}\cap S_{y_{2}}\cap S_{y_{3}}$. For $j = 1,2,3$, $S_{y_{j}} \cap \{y_4, y_5\} \neq \emptyset$. By the Pigeonhole Principle, we can assume that  $S_{y_{1}} = S_{y_{2}} = \{ x_1, y_4, u\}$. Since $\{x_1, y_4, u\}$ is a $\gamma_{t}(G-y_1)$-set and $x_1y_2 \notin E(G)$, $y_2y_4 \in E(G).$ Therefore $\{ x_1, y_4, u\}$ is not a $\gamma_{t}(G-y_2)$-set. It is a contradiction.

\noindent\emph{Case 3.3}. We assume that $N(u)\cap N(v) = \{x_1, x_2, x_3\}$ and $N(w)\cap N(v) = \{y_1, y_2, y_3,y_4 \}$. It divides into four cases depending on existing edges between $\{x_1, x_2, x_3\}$. Suppose that there is no edges in  $\{x_1, x_2, x_3\}$.
Without loss of generality, let $S_{x_{1}} =\{ x_2, y_1, w\}$. Then,  $x_2y_1, x_3y_1 \in E(G)$ and $x_1y_1 \notin E(G)$. By the similar way, we can assume that $x_1y_2, x_3y_2 \in E(G)$ and  $x_1y_3, x_2y_3 \in E(G)$. Furthermore,  $x_2y_2, x_3y_3 \notin E(G)$.
Considering  $S_{y_{4}}$, we may assume that $S_{y_4} = \{x_1, y_2, u \}$. Then, $y_1y_2 \in E(G)$ and $x_1y_4, y_2y_4 \notin E(G)$. If $x_2y_4 \in E(G)$ or $x_3y_4 \in E(G)$ then $\{x_2, y_1, u\}$ or $\{x_3, y_1, u\}$ is a $\gamma_{t}(G)$-set, which is a contradiction. Hence, we may assume that
 $x_2y_4, x_3y_4 \notin E(G)$.  It implies that $S_{y_2}=\{x_2, y_3, u \}$ and hence $y_3y_4 \in E(G)$.  Let us consider $S_{y_3}$. Since $y_2y_4 \notin E(G)$, $S_{y_3} = \{ x_3, y_1, u \}$. It implies that $y_1y_4 \in E(G)$. Then, $\{x_2, y_1, u\}$ is a $\gamma_{t}(G)$-set,  a contradiction.

If there is one edges in  $\{x_1, x_2, x_3\}$,  we assume that $x_2x_3 \in E(G)$.  Without loss of generality, let $S_{x_{1}} = \{x_2, y_1, w \}$. Then,   $x_2y_1 \in E(G)$ and $x_1y_1 \notin E(G)$.  Also, without loss of generality, we may assume that  $S_{x_{2}} = \{x_1, y_2, w \}$. It implies that $x_1y_2 \in E(G)$ and $x_3y_2 \in E(G)$. In this case, $\{x_3, y_2, w\}$ is a $\gamma_{t}(G)$-set.  It is a contradiction.

If there is two or three edges in  $\{x_1, x_2, x_3\}$, one can similarly get a contradiction as the case that there is one edge in  $\{x_1, x_2, x_3\}$.
\end{proof}

%By a similar philosophy of Theorem~\ref{mainthm3}, we have the
%following corollary.

%\begin{cor} Suppose that $G$ is  a graph
% of order $\Delta(G)+4$ with $\Delta(G)$ odd and $\delta(G)\geq 2$. ??\end{cor}

\begin{lem} Let $G$ be a connected graph
  with $\Delta(G)=9$ or $\Delta(G)\geq 11$. Then there are positive integers  $3,2=s_{1},s_{2}= s_{3}$
  satisfying the following two conditions;
\begin{enumerate}
\item
 $ 3+ 2 +s_{2}+s_{3}=\Delta(G)$
%\item
% $t\geq 3$
\item
 $2 = s_{1}\leq s_{2} = s_{3}$.
\end{enumerate}\label{lem6}
\end{lem}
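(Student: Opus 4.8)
The plan is to observe that the assertion of Lemma~\ref{lem6} is purely arithmetic: neither the edge structure nor the connectedness of $G$ enters, and the only relevant datum is the integer $\Delta(G)$. What must be produced are positive integers which, together with the prescribed values $3$ and $2$, form a list $3,\ 2=s_1,\ s_2=s_3$ obeying $3+2+s_2+s_3=\Delta(G)$ and $2=s_1\le s_2=s_3$. Writing $s:=s_2=s_3$, this amounts to solving $2s=\Delta(G)-5$ for an integer $s\ge 2$ and then setting $s_1:=2$, $s_2:=s_3:=s$.

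First I would record exactly when such an $s$ exists, namely precisely when $\Delta(G)-5$ is a nonnegative even integer that is at least $4$, i.e.\ when $\Delta(G)$ is odd with $\Delta(G)\ge 9$. The hypothesis ``$\Delta(G)=9$ or $\Delta(G)\ge 11$'' (in the odd-$\Delta$ regime in which the lemma is to be applied) says exactly $\Delta(G)\in\{9,11,13,\dots\}$, so $\Delta(G)-5\in\{4,6,8,\dots\}$ is even and $\ge 4$; hence $s:=\tfrac{\Delta(G)-5}{2}$ is a well-defined positive integer with $s\ge 2$.

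It then remains to check the two conditions with the choice $s_1:=2$ and $s_2:=s_3:=s$. For condition~(1): $3+2+s_2+s_3=5+2s=5+(\Delta(G)-5)=\Delta(G)$. For condition~(2): $s_1=2$ and $s_2=s_3=s\ge 2=s_1$, so $2=s_1\le s_2=s_3$, and all of $3,s_1,s_2,s_3$ are positive integers, as required.

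I do not expect a genuine obstacle here; the only point needing a little care is to line up the parity-and-size hypothesis with the solvability condition for $s$. If $\Delta(G)$ were even there would be no solution with $s_2=s_3$ (one would have to relax~(2) to $|s_2-s_3|=1$), and if $\Delta(G)=7$ then $s=1<2$ would violate~(2); these are exactly the values the hypothesis omits. In particular $\Delta(G)=9$ forces the minimal choice $s_2=s_3=2$, the base case of the construction for which this lemma is intended, and connectedness of $G$ plays no role beyond fixing the value $\Delta(G)$.
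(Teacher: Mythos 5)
Your proof is correct and is exactly the arithmetic the paper implicitly relies on: the paper states this lemma without any proof, treating the choice $s_1=2$, $s_2=s_3=\frac{\Delta(G)-5}{2}$ as immediate. Your added caveat is also well taken: as literally phrased the lemma would fail for even $\Delta(G)\ge 12$ (since $3+2+s_2+s_3=5+2s_2$ is odd), so the hypothesis must be read as $\Delta(G)$ odd with $\Delta(G)=9$ or $\Delta(G)\ge 11$, which is precisely the regime in which the subsequent construction of $4$-$\gamma_t$-critical graphs uses it, and connectedness of $G$ indeed plays no role.
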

Now we  construct a family of $4$-$\gamma_{t}$-critical graphs  of
order $\Delta(G)+4$ with  $\delta(G)\geq 2$ and $\Delta(G)=9$ or
$\Delta(G)\geq 11$.

Let $H$ be a copy of the complement graph $\overline{K_{3}}$ of the complete graph $K_{3}$ . Let
$V(H)=\{x_1, x_2, x_3\}$. Let $H_{i}$ be a graph with
a vertex set  $V(H_{i})=\{y_{i1},y_{i2},\cdots,y_{is_{i}}\}$ for
$i=1,2,3$.  Suppose that  $2 = s_{1}\leq
s_{2}= s_{3}$.
Let $F$ be the graph obtained from $H_{1}\cup H_{2}\cup H_{3}$ by
adding edges $y_{1j}y_{2k}$, $y_{2k}y_{3\ell}$, $y_{1j}y_{3\ell}$ for
$j=1,2$, $k=1,2,\cdots,s_{2}$, and $\ell=1,2,\cdots,s_{3}$, $j\neq k$, $j\neq \ell$, $k\neq \ell$.
Let $G$ be the graph obtained from $H\cup F$  and four new vertices
$v,u,z,w$  by adding edges $x_i y_{jk}$ for $1\leq i,j\leq 3$,
$i\neq j$ and $1\leq k\leq s_{j}$, and then joining $v$ to every
vertex in $H\cup F$, joining $u$  to every vertex in $H$
and joining $w$ to every vertex in $F$, and adding the edges $uz$ and $zw$. Then $\Delta(G)=
3+2+s_{2}+s_{3}$. Two figures in Figure~\ref{fig2} are examples of $4$-$\gamma_{t}$-critical graphs with $\Delta(G)= 9, 11$.

\begin{figure}
\begin{pspicture}[shift=-1.7](-2.3,-2.2)(4.3,7.2)
\rput[t](0,0){$$}
\psline(2.5,2.5)(1.5,4.5)(2.5,.5)
\psline(2.5,4.5)(1.5,2.5)(2.5,.5)
\psline(2.5,2.5)(1.5,.5)(2.5,4.5)
\psline(-1,-1)(0,-2)(2,-1)
\psline(2.5,4.5)(-1,2.5)(2.5,.5)
\psline(1.5,4.5)(-1,2.5)(1.5,.5)
\psline(2.5,2.5)(-1,4)(2.5,.5)
\psline(1.5,2.5)(-1,4)(1.5,.5)
\psline(2.5,4.5)(-1,1)(2.5,2.5)
\psline(1.5,4.5)(-1,1)(1.5,2.5)
\psline[linecolor=emgreen, linewidth=2pt](0,7)(-1,5)
\psline[linecolor=emgreen, linewidth=2pt](0,7)(2,5)
\psline[linecolor=emgreen, linewidth=2pt](-1,-1)(-1,0)
\psline[linecolor=emgreen, linewidth=2pt](2,-1)(2,0)
\psframe[linecolor=darkred,linewidth=1pt](-1.5,0)(-.5,5)
\psframe[linecolor=darkred,linewidth=1pt](1,0)(3,5)
\pscircle[fillstyle=solid,fillcolor=darkgray,linecolor=black](0,7){.1}
\pscircle[fillstyle=solid,fillcolor=darkgray,linecolor=black](-1,4){.1}
\pscircle[fillstyle=solid,fillcolor=darkgray,linecolor=black](-1,2.5){.1}
\pscircle[fillstyle=solid,fillcolor=darkgray,linecolor=black](-1,1){.1}
\pscircle[fillstyle=solid,fillcolor=darkgray,linecolor=black](-1,-1){.1}
\pscircle[fillstyle=solid,fillcolor=darkgray,linecolor=black](1.5,4.5){.1}
\pscircle[fillstyle=solid,fillcolor=darkgray,linecolor=black](2.5,4.5){.1}
\pscircle[fillstyle=solid,fillcolor=darkgray,linecolor=black](1.5,2.5){.1}
\pscircle[fillstyle=solid,fillcolor=darkgray,linecolor=black](2.5,2.5){.1}
\pscircle[fillstyle=solid,fillcolor=darkgray,linecolor=black](1.5,.5){.1}
\pscircle[fillstyle=solid,fillcolor=darkgray,linecolor=black](2.5,.5){.1}
\pscircle[fillstyle=solid,fillcolor=darkgray,linecolor=black](0,-2){.1}
\pscircle[fillstyle=solid,fillcolor=darkgray,linecolor=black](2,-1){.1}
\end{pspicture}
\begin{pspicture}[shift=-1.7](-2.3,-2.2)(5.3,7.2)
\rput[t](0,0){$$}
\psline(3.3,2.5)(1.5,4.5)(3.5,.5)
\psline(3.5,4.5)(1.5,2.5)(3.5,.5)
\psline(3.3,2.5)(1.5,.5)(3.5,4.5)
\psline(-1,-1)(0,-2)(2.5,-1)
\psline(3.5,4.5)(-1,2.5)(3.5,.5)
\psline(1.5,4.5)(-1,2.5)(1.5,.5)
\psline(3.3,2.5)(-1,4)(3.5,.5)
\psline(1.5,2.5)(-1,4)(1.5,.5)
\psline(3.5,4.5)(-1,1)(3.3,2.5)
\psline(1.5,4.5)(-1,1)(1.5,2.5)
\psline(3.5,.5)(2.5,4.5)(1.5,.5)
\psline(3.5,4.5)(2.5,.5)(1.5,4.5)
\psline(3.5,.5)(3.3,2.5)(3.5,4.5)
\psline(2.5,4.5)(-1,2.5)(2.5,.5)
\psline(2.5,4.5)(1.5,2.5)(2.5,.5)
\psline(2.5,4.5)(-1,1)
\psline(2.5,.5)(-1,4)
\psline[linecolor=emgreen, linewidth=2pt](0,7)(-1,5)
\psline[linecolor=emgreen, linewidth=2pt](0,7)(2.5,5)
\psline[linecolor=emgreen, linewidth=2pt](-1,-1)(-1,0)
\psline[linecolor=emgreen, linewidth=2pt](2.5,-1)(2.5,0)
\psframe[linecolor=darkred,linewidth=1pt](-1.5,0)(-.5,5)
\psframe[linecolor=darkred,linewidth=1pt](1,0)(4,5)
\pscircle[fillstyle=solid,fillcolor=darkgray,linecolor=black](0,7){.1}
\pscircle[fillstyle=solid,fillcolor=darkgray,linecolor=black](-1,4){.1}
\pscircle[fillstyle=solid,fillcolor=darkgray,linecolor=black](-1,2.5){.1}
\pscircle[fillstyle=solid,fillcolor=darkgray,linecolor=black](-1,1){.1}
\pscircle[fillstyle=solid,fillcolor=darkgray,linecolor=black](-1,-1){.1}
\pscircle[fillstyle=solid,fillcolor=darkgray,linecolor=black](1.5,4.5){.1}
\pscircle[fillstyle=solid,fillcolor=darkgray,linecolor=black](2.5,4.5){.1}
\pscircle[fillstyle=solid,fillcolor=darkgray,linecolor=black](3.5,4.5){.1}
\pscircle[fillstyle=solid,fillcolor=darkgray,linecolor=black](1.5,2.5){.1}
\pscircle[fillstyle=solid,fillcolor=darkgray,linecolor=black](3.3,2.5){.1}
\pscircle[fillstyle=solid,fillcolor=darkgray,linecolor=black](1.5,.5){.1}
\pscircle[fillstyle=solid,fillcolor=darkgray,linecolor=black](2.5,.5){.1}
\pscircle[fillstyle=solid,fillcolor=darkgray,linecolor=black](3.5,.5){.1}
\pscircle[fillstyle=solid,fillcolor=darkgray,linecolor=black](0,-2){.1}
\pscircle[fillstyle=solid,fillcolor=darkgray,linecolor=black](2.5,-1){.1}
\end{pspicture}
\caption{Figures of $4$-$\gamma_{t}$-critical graphs with $\Delta(G)= 9, 11$.}
\label{fig2}
\end{figure}
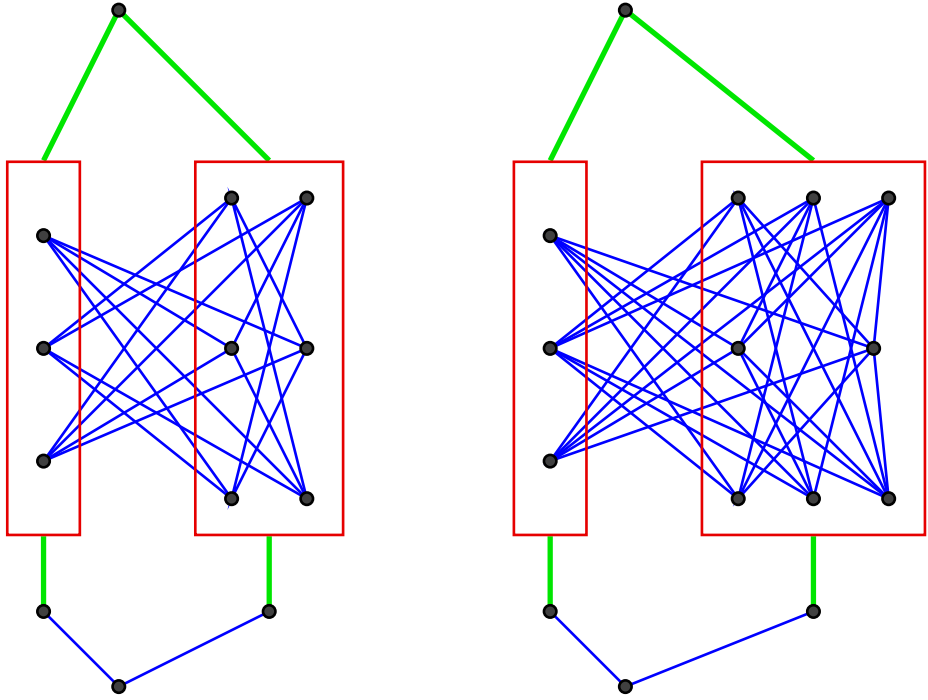

\begin{thm}  \label{mainthm4} The graph  $G$ in Figure \ref{fig2} is  $4$-$\gamma_{t}$-critical.\end{thm}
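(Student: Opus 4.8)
The plan is to verify the definition directly. Since $G$ is constructed with $\delta(G)\ge 2$ it has no leaf, hence no support vertex, so being $4$-$\gamma_{t}$-critical is equivalent to the two assertions (a) $\gamma_{t}(G)=4$, and (b) $\gamma_{t}(G-p)\le 3$ for \emph{every} vertex $p$ of $G$; note that $\delta(G)\ge 2$ also guarantees that $G-p$ has no isolated vertex, so each $\gamma_{t}(G-p)$ is well defined. So the whole proof reduces to establishing (a) and (b).

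First I would settle (a). The upper bound is immediate: $\{u,z,w\}$ induces the path $u-z-w$ and dominates every vertex except $v$, so $\{u,z,w,x_{1}\}$ (using $x_{1}u\in E(G)$) is a total dominating set and $\gamma_{t}(G)\le 4$. For the lower bound I would assume $T$ is a total dominating set with $|T|\le 3$ and derive a contradiction, relying on the following structural facts read off the construction: $N(z)=\{u,w\}$; $\{u,v,w\}$ is independent, with $N(u)\cap N(w)=\{z\}$, $N(u)\cap N(v)=\{x_{1},x_{2},x_{3}\}$, $N(v)\cap N(w)=V(F)$, and no single vertex adjacent to all three of $u,v,w$; $N(x_{1})\cap N(x_{2})\cap N(x_{3})=\{u,v\}$; and $v,w$ are the only vertices adjacent to every vertex of $F$. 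Since $N(z)=\{u,w\}$, the set $T$ must meet $\{u,w\}$, and one splits into the cases $w\in T$ and $u\in T$. If $w\in T$, then $w$ needs a neighbour in $T$ (forcing a second member of $T$ into $V(F)\cup\{z\}$), dominating $u$ forces a member of $T$ into $\{x_{1},x_{2},x_{3},z\}$, dominating $v$ forces one into $\{x_{1},x_{2},x_{3}\}\cup V(F)$, and dominating all of $x_{1},x_{2},x_{3}$ forces one into $\{u,v\}$; a short sub-casework over which two vertices accompany $w$ shows these demands cannot all be satisfied by three vertices. The case $u\in T$ is symmetric, with $w$ — which can be dominated only from $V(F)\cup\{z\}$ — playing the obstructive role. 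Hence $\gamma_{t}(G)=4$. I expect this lower bound to be the main obstacle: the sub-casework must be run against the precise adjacencies inside $F$ (the ``complete tripartite on $H_{1},H_{2},H_{3}$ minus a diagonal'' part), and in particular one must carefully exclude candidates of the shapes $\{u,x_{i},y\}$, $\{w,x_{i},y\}$, and $\{u,z,\cdot\}$, $\{w,z,\cdot\}$.

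Finally, for (b) I would exhibit for each type of vertex $p$ an explicit total dominating set of $G-p$ of size $3$, checking in each case that the three chosen vertices induce a subgraph with no isolated vertex and that they dominate $G-p$. Concretely: for $p=v$ take $\{u,z,w\}$; for $p=u$ take $\{v,w,y_{11}\}$; for $p=z$ take $\{v,x_{1},y_{21}\}$; for $p=w$ take $\{v,u,x_{1}\}$ (which induces $v-x_{1}-u$); for $p=x_{i}$ take $\{z,w,y_{i1}\}$, using that $y_{i1}$ is adjacent to $v$, to $w$, and to both $x_{j}$ with $j\ne i$; and for $p=y_{jk}$ take $\{u,x_{j},y_{ak}\}$ for an index $a\ne j$ chosen so that $y_{ak}$ exists (taking $a\in\{2,3\}$ when $j\in\{2,3\}$), using that $x_{j}$ dominates $H_{\ell}$ for every $\ell\ne j$ while $y_{ak}$ is adjacent to exactly the surviving vertices $y_{jc}$, $c\ne k$, of $H_{j}$, as well as to $v$ and $w$. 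The deletions $p=y_{jk}$ are the delicate part of (b), again because they require attention to the diagonal-deletion structure of $F$ (and notably the chosen set is \emph{not} a dominating set of $G$ itself, precisely because $y_{ak}$ misses $y_{jk}$); all the remaining cases are routine verification against Figure~\ref{fig2}.
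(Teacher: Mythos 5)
Your overall route coincides with the paper's: verify $\gamma_{t}(G)=4$ and then exhibit a $3$-element total dominating set of $G-p$ for every vertex $p$. Your part (b) is fine and in fact more complete than the paper's own proof (which leaves the deletions $p=y_{jk}$ to the reader); your sets $\{u,z,w\}$, $\{v,w,y_{11}\}$, $\{v,x_1,y_{21}\}$, $\{v,u,x_1\}$, $\{z,w,y_{i1}\}$ and $\{u,x_j,y_{ak}\}$ all check out against the construction. The genuine gap is exactly the step you single out as the main obstacle, the lower bound $\gamma_{t}(G)\ge 4$: for the general construction it is false, so the sub-casework you propose cannot be completed. Take $s_{2}=s_{3}\ge 3$ (e.g.\ $\Delta=11$, the second graph of Figure~\ref{fig2}) and consider $T=\{u,x_{1},y_{23}\}$ -- a candidate of the very shape $\{u,x_{i},y\}$ you say must be ``carefully excluded.'' Since $s_{1}=2$, the only second indices occurring in $H_{1}$ are $1$ and $2$, so $y_{23}$ is adjacent to both $y_{11}$ and $y_{12}$; moreover $x_{1}$ dominates $u$, $v$ and all of $H_{2}\cup H_{3}$, $u$ dominates $x_{1},x_{2},x_{3},z$, and $y_{23}$ dominates $w$. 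Hence $T$ is a total dominating set of size $3$, $\gamma_{t}(G)=3$, and the graph is not $4$-$\gamma_{t}$-critical. The exclusion you envisage succeeds only when $s_{2}=s_{3}=2$ ($\Delta=9$), where no vertex of $F$ is adjacent to both vertices of another part (the two vertices of each part carry the indices $1$ and $2$, and any $y_{jk}$ with $k\in\{1,2\}$ misses one of them); there your casework does close, and the theorem holds for that instance.

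To be fair, this defect lies in the statement and construction rather than in your strategy: the paper's proof simply declares ``it is obvious that $\gamma_{t}(G)=4$'' and never runs the argument, which is precisely where the claim breaks down for $\Delta\ge 11$; an honest execution of your plan would uncover the $3$-set above instead of a contradiction. Separately, one smaller logical slip in your sketch of the case $w\in T$: ``dominating all of $x_{1},x_{2},x_{3}$ forces a member of $T$ into $\{u,v\}$'' is not a valid single deduction, since two vertices of $F$ (for instance $y_{11}$ and $y_{21}$) jointly dominate all three $x_{i}$; in that configuration $u$ is left undominated, so the case can still be closed, but the contradiction must be drawn through $u$, not through the $x_{i}$ alone.
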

\begin{proof} It is obvious that $\gamma_{t}(G)=4$. So we only prove that $G$ is
$\gamma_{t}$-critical graph.  First,  $\{v,y_{11},w\}$,  $\{v, x_1, u\}$, $\{v, x_1, y_{11}\}$
and $\{u,w,z\}$ is a total dominating set of $G-u$, $G-w$, $G-z$ and $G-v$
respectively. For any vertex $x_i \in V(G)$, $\{w,y_{i1}, z\}$ is a
total dominating set of $G- x_i$. For any vertex $y_{jk}\in
V(G)$, It is easy to choose a total dominating set of $G-y_{jk}$,  In
general, for any vertex $a \in V(G)$, $\gamma_{t}(G-a)=3$. So $G$ is a $4$-$\gamma_{t}$-critical graph.
\end{proof}

By Theorems \ref{mainthm0}, \ref{mainthm3} and \ref{mainthm4}, we have the following corollary.

\begin{cor} \label{m=4summary}
There is a $4$-$\gamma_{t}$-critical graph $G$ of order $\Delta(G) +4$ with
$\delta(G) \geq 2$ if and only if  $\Delta(G) = 2,4,6,8$   or  $\Delta(G) \ge 9$.
\end{cor}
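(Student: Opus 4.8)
The plan is to obtain the corollary by simply combining Theorems~\ref{mainthm0}, \ref{mainthm3} and \ref{mainthm4}, treating the two implications of the ``if and only if'' separately; note that for $m=4$ one has $2\lfloor\frac{m-1}{2}\rfloor=2$, so the three previously open odd values $\Delta=3,5,7$ are exactly those ruled out by Theorem~\ref{mainthm3}.

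For the forward direction, I would assume $G$ is a $4$-$\gamma_{t}$-critical graph of order $\Delta(G)+4$ with $\delta(G)\ge 2$ and argue that $\Delta(G)\in\{2,4,6,8\}$ or $\Delta(G)\ge 9$. Since $\delta(G)\ge 2$ forces $\Delta(G)\ge 2$ (this is also the $m=4$ case of Theorem~\ref{mainthm0}(1)), the only values left to exclude are $\Delta(G)=3,5,7$, and these are excluded by Theorem~\ref{mainthm3}. Every other integer $\Delta(G)\ge 2$ is either even or an odd number $\ge 9$, so this direction is pure bookkeeping.

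For the converse, I would exhibit, for each admissible $\Delta$, a $4$-$\gamma_{t}$-critical graph of order $\Delta+4$ with minimum degree $\ge 2$. For even $\Delta\ge 2$ --- which includes $\Delta=2,4,6,8$ as well as every even $\Delta\ge 10$ --- this is the graph constructed in the proof of Theorem~\ref{mainthm0}(2): vertex set $\{v\}\cup(U\cup W)\cup\{u_1,u_2,u_3\}$ with $|U|=|W|=\Delta/2$, the graph $K_{\Delta/2,\Delta/2}-E(M)$ induced between $U$ and $W$, and the path $u_1u_2u_3$ attached as in Figure~\ref{fig0}a); here $\delta=2$, realized at $u_2$ (and at the vertices of $U\cup W$ when $\Delta=2$, in which case $G\cong C_6$). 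For odd $\Delta\ge 9$ I would invoke Lemma~\ref{lem6} together with Theorem~\ref{mainthm4}: Lemma~\ref{lem6} provides, for every odd $\Delta\ge 9$, parameters $s_1=2\le s_2=s_3=(\Delta-5)/2$ with $3+2+s_2+s_3=\Delta$, and the graph $G$ built from $H\cong\overline{K_3}$, the associated graph $F$, and the four extra vertices $v,u,z,w$ then has $\Delta(G)=\Delta$, minimum degree $2$ (attained at $z$), and is $4$-$\gamma_{t}$-critical by Theorem~\ref{mainthm4}. Alternatively one may cite Theorem~\ref{mainthm0}(3) for these $\Delta$, since its proof for $m=4$ relies on exactly this family.

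The only point that needs a moment's attention --- hardly an obstacle --- is verifying that the admissible-parameter range of Lemma~\ref{lem6} genuinely sweeps out all odd $\Delta\ge 9$, and that $\delta(G)\ge 2$ holds in each of the two constructions; granting that, the corollary follows directly from Theorems~\ref{mainthm0}, \ref{mainthm3} and \ref{mainthm4}.
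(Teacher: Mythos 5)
Your proposal is correct and follows essentially the same route as the paper, whose entire proof of the corollary is the one-line citation of Theorems~\ref{mainthm0}, \ref{mainthm3} and \ref{mainthm4}; you merely make explicit the bookkeeping the paper leaves implicit (exclusion of $\Delta=3,5,7$ via Theorem~\ref{mainthm3}, the even-$\Delta$ construction from Theorem~\ref{mainthm0}(2), and the Lemma~\ref{lem6}/Theorem~\ref{mainthm4} family for odd $\Delta\ge 9$, with the $\delta\ge 2$ checks).
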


From now on, we aim to consider an $m$-$\gamma_{t}$-critical graph $G$ of order $\Delta(G) +m$ with
$\delta(G)\geq 2$ for any odd $m \ge 9$ and odd $\Delta(G) \ge
m$.

\begin{figure}
\begin{pspicture}[shift=-1.7](-4,-.2)(4.5,3.2)
\psline(0,3)(-.5,1)(-.5,0)(-1.5,0)(-1.5,1)(0,3)
\psline(0,3)(3.5,1)(3.5,0)(2.5,0)(2.5,1)(0,3)
\psline(0,3)(-2.5,1)(-2.5,0)(-3.5,0)(-3.5,1)(0,3)
\psline(0,3)(.3,.9) \psline(0,3)(.7,1.1) \psline(.7,1.1)(.5,0)
\psline(.3,.9)(.5,0)(1.5,0)(1.5,1)(0,3)
\pccurve[angleA=30,angleB=157](-3.5,1)(.7,1.1)
\pccurve[angleA=27,angleB=160](-2.5,1)(.7,1.1)
\pccurve[angleA=-30,angleB=-157](-1.5,1)(.3,.9)
\pccurve[angleA=-27,angleB=-160](-.5,1)(.3,.9)
\pccurve[angleA=-30,angleB=-157](1.5,1)(3.5,1)
\pccurve[angleA=-27,angleB=-160](1.5,1)(2.5,1)
\pscircle[fillstyle=solid,fillcolor=darkgray,linecolor=black](-3.5,0){.1}
\pscircle[fillstyle=solid,fillcolor=darkgray,linecolor=black](-3.5,1){.1}
\pscircle[fillstyle=solid,fillcolor=darkgray,linecolor=black](-2.5,1){.1}
\pscircle[fillstyle=solid,fillcolor=darkgray,linecolor=black](-2.5,0){.1}
\pscircle[fillstyle=solid,fillcolor=darkgray,linecolor=black](-1.5,1){.1}
\pscircle[fillstyle=solid,fillcolor=darkgray,linecolor=black](-1.5,0){.1}
\pscircle[fillstyle=solid,fillcolor=darkgray,linecolor=black](-.5,0){.1}
\pscircle[fillstyle=solid,fillcolor=darkgray,linecolor=black](-.5,1){.1}
\pscircle[fillstyle=solid,fillcolor=darkgray,linecolor=black](.5,0){.1}
\pscircle[fillstyle=solid,fillcolor=darkgray,linecolor=black](.3,.9){.1}
\pscircle[fillstyle=solid,fillcolor=darkgray,linecolor=black](.7,1.1){.1}
\pscircle[fillstyle=solid,fillcolor=darkgray,linecolor=black](0,3){.1}
\pscircle[fillstyle=solid,fillcolor=darkgray,linecolor=black](3.5,0){.1}
\pscircle[fillstyle=solid,fillcolor=darkgray,linecolor=black](3.5,1){.1}
\pscircle[fillstyle=solid,fillcolor=darkgray,linecolor=black](2.5,1){.1}
\pscircle[fillstyle=solid,fillcolor=darkgray,linecolor=black](2.5,0){.1}
\pscircle[fillstyle=solid,fillcolor=darkgray,linecolor=black](1.5,1){.1}
\pscircle[fillstyle=solid,fillcolor=darkgray,linecolor=black](1.5,0){.1}
\end{pspicture}
\caption{Figures of $m$ $\gamma_{t}$-critical graph $G$ with $\gamma_{t}(G)=n-\Delta(G)$ and $\delta(G) \ge 2$ for $m = 9$.}
\label{fig3}
\end{figure}
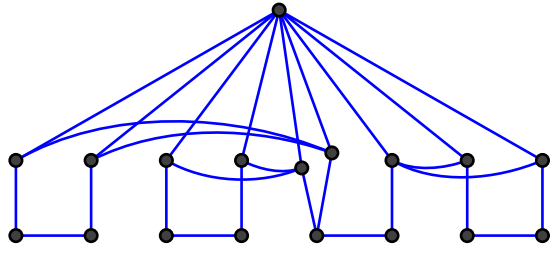

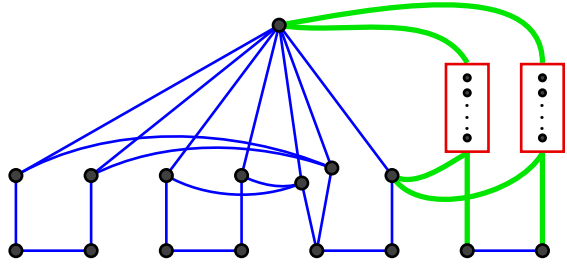
\begin{figure}
\begin{pspicture}[shift=-1.7](-4,-.2)(4.5,3.2)
\psline(0,3)(-.5,1)(-.5,0)(-1.5,0)(-1.5,1)(0,3)
\pccurve[angleA=10,angleB=90, linecolor=emgreen, linewidth=2pt](0,3)(3.5,2.5)
\psline[linecolor=emgreen, linewidth=2pt](3.5,1.3)(3.5,0)
\psline(3.5,0)(2.5,0)
\psline[linecolor=emgreen, linewidth=2pt](2.5,0)(2.5,1.3)
\pccurve[angleA=-10,angleB=120, linecolor=emgreen, linewidth=2pt](0,3)(2.5,2.5)
\psline(0,3)(-2.5,1)(-2.5,0)(-3.5,0)(-3.5,1)(0,3)
\psline(0,3)(.3,.9) \psline(0,3)(.7,1.1) \psline(.7,1.1)(.5,0)
\psline(.3,.9)(.5,0)(1.5,0)(1.5,1)(0,3)
\pccurve[angleA=30,angleB=157](-3.5,1)(.7,1.1)
\pccurve[angleA=27,angleB=160](-2.5,1)(.7,1.1)
\pccurve[angleA=-30,angleB=-157](-1.5,1)(.3,.9)
\pccurve[angleA=-27,angleB=-160](-.5,1)(.3,.9)
\pccurve[angleA=-60,angleB=-120, linecolor=emgreen, linewidth=2pt](1.5,1)(3.5,1.3)
\pccurve[angleA=-30,angleB=-150, linecolor=emgreen, linewidth=2pt](1.5,1)(2.5,1.3)
\rput[t](2.5,1.95){$.$}
\rput[t](2.5,1.8){$.$}
\rput[t](2.5,1.65){$.$}
\rput[t](3.5,1.95){$.$}
\rput[t](3.5,1.8){$.$}
\rput[t](3.5,1.65){$.$}
\psframe[linecolor=darkred,linewidth=1pt](2.2,1.3)(2.8,2.5)
\psframe[linecolor=darkred,linewidth=1pt](3.2,1.3)(3.8,2.5)
\pscircle[fillstyle=solid,fillcolor=darkgray,linecolor=black](-3.5,0){.1}
\pscircle[fillstyle=solid,fillcolor=darkgray,linecolor=black](-3.5,1){.1}
\pscircle[fillstyle=solid,fillcolor=darkgray,linecolor=black](-2.5,1){.1}
\pscircle[fillstyle=solid,fillcolor=darkgray,linecolor=black](-2.5,0){.1}
\pscircle[fillstyle=solid,fillcolor=darkgray,linecolor=black](-1.5,1){.1}
\pscircle[fillstyle=solid,fillcolor=darkgray,linecolor=black](-1.5,0){.1}
\pscircle[fillstyle=solid,fillcolor=darkgray,linecolor=black](-.5,0){.1}
\pscircle[fillstyle=solid,fillcolor=darkgray,linecolor=black](-.5,1){.1}
\pscircle[fillstyle=solid,fillcolor=darkgray,linecolor=black](.5,0){.1}
\pscircle[fillstyle=solid,fillcolor=darkgray,linecolor=black](.3,.9){.1}
\pscircle[fillstyle=solid,fillcolor=darkgray,linecolor=black](.7,1.1){.1}
\pscircle[fillstyle=solid,fillcolor=darkgray,linecolor=black](0,3){.1}
\pscircle[fillstyle=solid,fillcolor=darkgray,linecolor=black](3.5,0){.1}
\pscircle[fillstyle=solid,fillcolor=darkgray,linecolor=black](3.5,1.5){.06}
\pscircle[fillstyle=solid,fillcolor=darkgray,linecolor=black](3.5,2.3){.06}
\pscircle[fillstyle=solid,fillcolor=darkgray,linecolor=black](3.5,2.1){.06}
\pscircle[fillstyle=solid,fillcolor=darkgray,linecolor=black](2.5,1.5){.06}
\pscircle[fillstyle=solid,fillcolor=darkgray,linecolor=black](2.5,2.3){.06}
\pscircle[fillstyle=solid,fillcolor=darkgray,linecolor=black](2.5,2.1){.06}
\pscircle[fillstyle=solid,fillcolor=darkgray,linecolor=black](2.5,0){.1}
\pscircle[fillstyle=solid,fillcolor=darkgray,linecolor=black](1.5,1){.1}
\pscircle[fillstyle=solid,fillcolor=darkgray,linecolor=black](1.5,0){.1}
\end{pspicture}
\caption{Figures of $m$ $\gamma_{t}$-critical graph $G$ with $\gamma_{t}(G)=n-\Delta(G)$ and $\delta(G) \ge 2$ for $m \ge 9$,
where each box contains $\frac{\Delta-7}{2}$ vertices and the subgraph induced by the vertices in two boxes is
$K_{\frac{\Delta-7}{2},\frac{\Delta-7}{2}}-E(M)$, where $M$ is  a 1-factor of $K_{\frac{\Delta-7}{2},\frac{\Delta-7}{2}}$.}
\label{fig4}
\end{figure}

\begin{thm}  \label{mainthm5} For any odd $m \ge 9$ and for any odd $\Delta \ge
m$, there exists an $m$-$\gamma_{t}$-critical graph $G$ of order $\Delta +m$ with $\Delta(G)=\Delta$ and $\delta(G) \ge 2$.\end{thm}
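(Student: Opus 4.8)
The plan is to handle all odd $m\ge 9$ at once by reducing to the single case $m=9$ via vertex amalgamation with $5$-cycles, and then to check directly that an explicit family of graphs disposes of that case.

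\emph{Reduction to $m=9$.} Suppose that for every odd $\Delta_0\ge 9$ we have a $9$-$\gamma_t$-critical graph $G_0$ of order $\Delta_0+9$ with $\Delta(G_0)=\Delta_0$ and $\delta(G_0)\ge 2$. Since $\gamma_t(G_0)=9=n(G_0)-\Delta(G_0)$, Lemma~\ref{properties}$(3)$ shows that, for a vertex $v_0$ of maximum degree, every component of $G_0[V(G_0)-N[v_0]]$ is a $P_2$; recall also that $C_5$ is a $3$-$\gamma_t$-critical graph of order $5$ with $\delta=2$ and that $C_5[V(C_5)-N[v]]$ is a single $P_2$ for every vertex $v$. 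Hence Lemma~\ref{vertex-amal} applies, and amalgamating $G_0$ with a copy of $C_5$ at their maximum-degree vertices yields an $11$-$\gamma_t$-critical graph of order $(\Delta_0+2)+11$ with minimum degree at least $2$. The result is again a $\gamma_t$-critical graph whose order equals its maximum degree plus its (odd) total domination number, so Lemma~\ref{properties}$(3)$ applies once more and the $P_2$-component hypothesis is preserved; we therefore iterate. After $\frac{m-9}{2}$ such amalgamations we obtain an $m$-$\gamma_t$-critical graph of minimum degree $\ge 2$ with maximum degree $\Delta:=\Delta_0+m-9$ and order $\Delta+m$. As $\Delta_0$ runs over the odd integers $\ge 9$, the number $\Delta$ runs over exactly the odd integers $\ge m$, so it suffices to construct $G_0$ for each odd $\Delta_0\ge 9$.

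\emph{The case $m=9$.} For odd $\Delta_0\ge 9$ take $G_0$ to be the graph of Figures~\ref{fig3} and~\ref{fig4}: a vertex $v$; four pairwise disjoint edges $u_iw_i$, forming $G_0[V(G_0)-N[v]]$; two $5$-cycle gadgets and one further ``special'' gadget attached at $v$, together contributing $2+2+3$ to $d(v)$; a ``box'' gadget built on $K_{s,s}-E(M)$ with $s=\frac{\Delta_0-7}{2}$ and $M$ a $1$-factor, attached to $v$ so as to contribute $\Delta_0-7$ to $d(v)$; and the cross edges among the neighbours of $v$ shown in the figures. Then $d(v)=2+2+3+(\Delta_0-7)=\Delta_0$ and $n(G_0)=\Delta_0+9$. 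One checks $\gamma_t(G_0)=9$ in a routine way: for any $x\in N(v)$ the set $(V(G_0)-N[v])\cup\{x\}$ is a total dominating set, since each neighbour of $v$ is adjacent to one of the eight vertices $u_i,w_i$; and $\gamma_t(G_0)\ge 9$ follows from a short analysis of how a total dominating set must dominate the four disjoint $P_2$'s together with $v$. It then remains to verify the critical condition $\gamma_t(G_0-x)=8$ for every vertex $x$; since $\delta(G_0)\ge 2$ there are no leaves, so $S(G_0)=\emptyset$ and every $x$ must be treated. This is done by grouping the vertices into the apex $v$, the eight vertices $u_i,w_i$, the fixed neighbours of $v$ on the two $5$-cycle gadgets and the special gadget, and the neighbours of $v$ inside the box gadget, and writing down for a representative of each class an explicit $8$-element total dominating set of $G_0-x$; the cross edges are exactly what make these sets total dominating.

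\emph{Main obstacle.} The reduction is immediate from Lemma~\ref{vertex-amal}, given that Lemma~\ref{properties}$(3)$ keeps the $P_2$-component hypothesis alive along the chain of amalgamations. The substance of the proof is the $m=9$ case, and the genuine difficulty there is not any single deletion but making one choice of cross edges and of the box gadget for which $\gamma_t(G_0-x)=8$ holds \emph{uniformly in $\Delta_0$} over all odd $\Delta_0\ge 9$. The deletions of $v$ and of the lower vertices $u_i,w_i$ are comparatively routine; the delicate ones are the deletions of box vertices, where the matching-free bipartite structure $K_{s,s}-E(M)$ must be exploited to reach the former neighbours of the deleted vertex, and where the cross edges joining the box gadget to the three fixed gadgets have to be arranged so that an $8$-element total dominating set of $G_0-x$ survives as $s=\frac{\Delta_0-7}{2}$ grows. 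The base instance $\Delta_0=9$ corresponds to box size $s=1$ and is the graph of Figure~\ref{fig3}, which can be checked by inspection.
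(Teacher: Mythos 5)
Your proposal follows the paper's proof essentially verbatim: the same reduction to $m=9$ by (iterated) vertex amalgamation with $C_5$'s via Lemma~\ref{vertex-amal}, and the same family of $9$-$\gamma_t$-critical graphs built from two $5$-cycle gadgets, the special gadget, and the $K_{\frac{\Delta-7}{2},\frac{\Delta-7}{2}}-E(M)$ box of Figures~\ref{fig3} and~\ref{fig4}. The verifications you defer as routine (the lower bound $\gamma_t(G)\ge 9$ via $|S\cap N_i|\ge 2$ and explicit $8$-element total dominating sets for representative vertex deletions) are precisely the checks the paper writes out, so your approach is the same and correct.
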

\begin{proof}
Assume that there exists a $9$-$\gamma_{t}$-critical graph $G_1$ of order $\Delta_1 +9$ with $\Delta(G_1)=\Delta_1$ and $\delta(G_1) \ge 2$ for any odd $\Delta_1 \ge 9$. Then for odd $m \ge 9$ and for any odd $\Delta \ge
m$, one can construct $m$-$\gamma_{t}$-critical graph $G$ of order $\Delta +m$ with $\Delta(G)=\Delta$ and $\delta(G) \ge 2$ using a vertex amalgamation of $G_1$ and several $C_5$'s. Hence, it suffices to show that there exists a $9$-$\gamma_{t}$-critical graph $G$ of order $\Delta +9$ with $\Delta(G)=\Delta$ and $\delta(G) \ge 2$ for any odd $\Delta \ge 9$.

For any $\Delta \ge 9$, let $G=(V,E)$ be a graph whose vertex set is $\{v\} \cup \bigcup_{i=1}^{4}\left(U_i \cup W_i \cup \{u_i, w_i \} \right)$, where
 \begin{eqnarray*}
 U_i &=& \{ x_i \} ~\mbox{for $i=1,2$},~U_3=\{ x_{3 1}, x_{3 2} \}, U_4 = \{ x_{4 1}, x_{4 2}, \ldots, x_{4 \frac{\Delta-7}{2}} \}, \\
 W_i &=& \{ y_i \} ~\mbox{for $i=1,2,3$},~~ W_4 = \{ y_{4 1}, y_{4 2}, \ldots, y_{4 \frac{\Delta-7}{2}} \}
 \end{eqnarray*}
   and its edge set is composed of
   \begin{eqnarray*} & & \{ v x, v y, xu_i, yw_i, u_iw_i \ | \ x \in U_i ,\  y \in W_i,~i=1,2,3,4 \}  \\
& \cup & \{x_ix_{3 i}, y_ix_{3 i} \ | \ x_i \in U_i ,\  y_i \in W_i,~i=1,2 \}  \\
& \cup & \{y_3x, y_3y \ | \ y_3 \in W_3 ,\  x \in U_4 ,\ y \in W_4 \} \end{eqnarray*} as in Figure~\ref{fig3} and the subgraph induced by the vertices in $U_4$ and $W_4$
 is $K_{\frac{\Delta-7}{2},  \frac{\Delta-7}{2}}-E(M)$, where $M$ is a 1-factor of $K_{\frac{\Delta-7}{2},  \frac{\Delta-7}{2}}$.
 For our convenience, let $N_i = U_i \cup W_i \cup \{ u_i, w_i \}$ for $i=1,2,3,4$.  We want to show that $G$ is a 9-$\gamma_t$-critical graph of order $\Delta + 9$. Let $S$ be a total dominating set of $G$. Then, one can check that $\gamma_t(G) = |S| \ge 8$  because for $i=1,2,3,4$, $|S \cap N_i | \ge 2$ for $S$ to dominate $u_i$ and $w_j$. Suppose that $\gamma_t(G) = 8$. Then,  $|S \cap N_i | = 2$ for any $i=1,2,3,4$. Especially, $|S\cap N_3|=2$. If $S\cap N_3 = \{ x_{3 1}, u_3 \}$ then for $S$ to dominate $y_3$, $S \cap N_3$ is $\{x_{4 j}, u_4 \}$ or $\{y_{4 j}, w_4 \}$ for some $j=1,2, \ldots,\frac{\Delta-7}{2}$. In either cases, $W_4$ or $U_4$ is not dominated. For other choices of $S\cap N_3$, one can similarly show that $V(G)$ is not totally dominated by $S$ if $|S \cap N_3|=2$. So, $\gamma_t(G) = |S| \ge 9$. For $S_1 =\{u_i, w_i \ | \ i=1,2,4 \} \cup \{v, x_{3 1}, u_3 \}$, $S_1$ is total dominating set of $G$. Hence, $\gamma_t(G)=9$.

 If we delete  $u_j$ for some $j=1,2,3,4$, then for some $y \in W_j$, $\{u_i, w_i \ | \ i=1,2,3,4, ~i\neq j \} \cup \{ v,y \}$ is a total dominating set of $G-u_j$. Hence, $\gamma_t(G-u_j)=8$. Similarly, one can show that $\gamma_t(G-w_j)=8$. If we delete $x_1$ from $G$ then $\{u_i, w_i \ | \ i=2,3,4 \} \cup \{ y_1,w_1 \}$ is a total dominating set of $G-u_j$ and hence $\gamma_t(G-x_1)=8$. If we delete $x_{3,1}$ from $G$ then
 $\{u_1, w_1 , x_2, y_2, x_{3 2}, y_3, x_{4 1}, y_{4 1} \}$ is a total dominating set of $G-x_{3,1}$ and hence $\gamma_t(G-x_{3 1})=8$. Similarly, one can show that for any $z \in V(G)$, $\gamma_t(G-z)=8$. Therefore, $G$ is a 9-$\gamma_t$-critical graph of order $\Delta + 9$.
\end{proof}

By Theorems \ref{mainthm0} and \ref{mainthm5}, we have the following corollary.

\begin{cor} \label{m>=9summary}
For any odd $m \ge 9$, there exists an $m$-$\gamma_{t}$-critical graph $G$ of order $\Delta(G) +m$ with $\delta(G) \ge 2$ if and only if   $\Delta(G) \ge
m-1$.
\end{cor}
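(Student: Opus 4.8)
The plan is to reduce to the single case $m=9$ and then give an explicit construction there. For the reduction, suppose that for every odd $\Delta_1\ge 9$ we have a $9$-$\gamma_t$-critical graph $G_1$ of order $\Delta_1+9$ with $\Delta(G_1)=\Delta_1$, $\delta(G_1)\ge 2$, and such that every component of $G_1[V(G_1)-N[v_1]]$ is a $P_2$, where $v_1$ is a vertex of maximum degree. Given odd $m\ge 9$ and odd $\Delta\ge m$, I would set $\Delta_1=\Delta-m+9$ (which is odd and at least $9$), take the corresponding $G_1$, and vertex-amalgamate $\frac{m-9}{2}$ copies of $C_5$ one after another, always at the current maximum-degree vertex. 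Since $C_5$ is $3$-$\gamma_t$-critical of order $5$ and $C_5-N[x]$ is a $P_2$ for any vertex $x$, Lemma~\ref{vertex-amal} applies at each step, and the hypothesis that every component of $G[V(G)-N[v]]$ is a $P_2$ is preserved (the old $P_2$'s together with the new one from $C_5$). Iterating raises $\gamma_t$ from $9$ to $9+2\cdot\frac{m-9}{2}=m$, the order from $\Delta_1+9$ to $\Delta+m$, and the maximum degree from $\Delta_1$ to $\Delta$, keeping $\delta\ge 2$ throughout; so it suffices to build the family $G_1$.

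For $m=9$ and odd $\Delta\ge 9$, set $s=\frac{\Delta-7}{2}\ge 1$ and take $G$ to be the graph in the statement: a hub $v$, four gadgets $N_i=U_i\cup W_i\cup\{u_i,w_i\}$ with $|U_1|=|W_1|=|U_2|=|W_2|=|W_3|=1$, $|U_3|=2$, $|U_4|=|W_4|=s$, the edges $u_iw_i$, the extra edges of the statement (tying $x_{31},x_{32}$ into $N_1,N_2$ and $y_3$ into $N_4$), and a copy of $K_{s,s}-E(M)$ inside $N_4$ for a perfect matching $M$. A direct count gives $n(G)=\Delta+9$, $d(v)=\Delta$ with every other degree strictly smaller, $\delta(G)=2$, and $G[V(G)-N[v]]$ equal to four copies of $P_2$; in particular $G_1:=G$ is eligible for the reduction step above.

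Next I would verify $\gamma_t(G)=9$. For a total dominating set $S$, the vertices $u_i$ and $w_i$ each need a neighbour in $S$ and their neighbourhoods inside $N_i$ are disjoint, so $|S\cap N_i|\ge 2$ for each $i$ and $|S|\ge 8$. The heart of the matter is ruling out $|S|=8$: then $|S\cap N_i|=2$ for all $i$ and $v\notin S$, and one goes through the finitely many two-element subsets of $N_3$ compatible with the local constraints from $u_3$ and $w_3$ and shows each is impossible --- in some cases an $x_{3j}$ or $y_3$ is left undominated once $v\notin S$; in others the thereby forced shapes of $S\cap N_1$ and $S\cap N_2$ leave $y_1$ or $y_2$ undominated; and in the remaining cases $y_3$ must be dominated from $N_4$, but then the two vertices of $S\cap N_4$ cannot dominate all of $U_4\cup W_4$, which is precisely where the deletion of the $1$-factor $M$ from $K_{s,s}$ is used. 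Hence $|S\cap N_3|\ge 3$, so $\gamma_t(G)\ge 9$, while $\{u_i,w_i\mid i=1,2,4\}\cup\{v,x_{31},u_3\}$ is a total dominating set of size $9$; thus $\gamma_t(G)=9$. This strict lower bound is the one step that is more than bookkeeping: it is where the unequal gadget sizes and the removal of $M$ do their work, and where all two-element candidates for $S\cap N_3$, together with their consequences for $S\cap N_1,S\cap N_2,S\cap N_4$, must be exhausted.

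Finally, for criticality I would, for each $z\in V(G)$, exhibit a total dominating set of $G-z$ of size $8$. Deleting a $u_j$ or $w_j$ is immediate: replace it by a suitable neighbour in $U_j$ or $W_j$ and keep $v$. Deleting $x_1,x_2,y_1,y_2,y_3,u_3,w_3$, or an $x_{3j}$ is handled by using the cross-edges to reach around the missing vertex. The only delicate cases are $z=x_{4k}$ and $z=y_{4k}$: here one uses that even after removing $M$, all remaining vertices of $U_4$ (respectively $W_4$) can be dominated by $y_3$ together with one matching-avoiding vertex of $W_4$ (respectively $U_4$). Collecting these gives $\gamma_t(G-z)=8$ for every $z$, so $G$ is $9$-$\gamma_t$-critical, which by the reduction step finishes the proof.
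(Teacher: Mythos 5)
The part you do prove is, in substance, the paper's own argument for Theorem~\ref{mainthm5}: the same reduction to the case $m=9$ by repeatedly amalgamating copies of $C_5$ at a maximum-degree vertex via Lemma~\ref{vertex-amal}, and the same four-gadget construction with $K_{\frac{\Delta-7}{2},\frac{\Delta-7}{2}}-E(M)$ between $U_4$ and $W_4$; your sketches of $\gamma_t(G)=9$ (via $|S\cap N_i|\ge 2$ and the case analysis on $S\cap N_3$) and of criticality are at the same level of detail as the paper's, and I see no error in them.

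As a proof of Corollary~\ref{m>=9summary}, however, there is a genuine gap: the corollary is an equivalence over all values of $\Delta$, while your argument only produces graphs when $\Delta$ is odd and $\Delta\ge m$. You never address (i) the ``only if'' direction, i.e.\ that no $m$-$\gamma_t$-critical graph of order $\Delta+m$ with $\delta\ge 2$ exists when $\Delta<m-1=2\lfloor\frac{m-1}{2}\rfloor$, which comes from Lemma~\ref{properties}(3) and is recorded as Theorem~\ref{mainthm0}(1); nor (ii) existence for even $\Delta\ge m-1$, which is Theorem~\ref{mainthm0}(2) --- and since $m-1$ is even for odd $m$, these even values (including the boundary case $\Delta=m-1$ itself) are not covered by your odd-$\Delta$ construction. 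Your closing claim that the reduction ``finishes the proof'' is therefore unjustified for the stated iff. The repair is immediate and is exactly what the paper does: combine your construction (which is Theorem~\ref{mainthm5}) with Theorem~\ref{mainthm0} to supply both the nonexistence for $\Delta<m-1$ and the even-$\Delta$ existence. As written, the proposal proves Theorem~\ref{mainthm5}, not the corollary.
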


\noindent {\bf Remark:} We settled the
existence problem with respect to the parities of the total
domination number $m$ and the maximum degree $\Delta$ except some cases. The only remaining open cases are $ \Delta =5,7,9$ for $m=5$ and $\Delta =7,9,11$ for $m=7$.

\end{document}